\numberwithin{equation}{section}
\newtheorem{theorem}{Theorem}[section]
\newtheorem{proposition}[theorem]{Proposition}
\newtheorem{corollary}[theorem]{Corollary}
\newtheorem{lemma}[theorem]{Lemma}
\theoremstyle{definition}
\theoremstyle{remark}
\newtheorem{remark}[theorem]{Remark}
\newcommand{\ep}{\varepsilon}
\newcommand{\R}{\mathbb R_+^N}
\newcommand{\BR}{\partial \mathbb R_+^N}
\newcommand{\lb}{\lambda}
\newcommand{\intr}{\int_{\mathbb R^N}}
\begin{document}
\title{Liouville Type Theorems for Two Mixed Boundary Value Problems with General Nonlinearities}
\author{Xiaohui  Yu$^{*}$}
\thanks{*The Center for China's Overseas Interests,
Shenzhen University, Shenzhen Guangdong, 518060, The People's
Republic of China(yuxiao\_211@163.com)\\
 {\bf Mathematics Subject Classification (2010)}: 35J60, 35J57,
35J15.} \maketitle

{\scriptsize  {\bf Abstract:} In this paper, we study the
nonexistence of positive solutions for the following two mixed
boundary value problems. The first problem is the mixed
nonlinear-Neumann boundary value problem
$$
\left\{
  \begin{array}{ll}
  \displaystyle
-\Delta u=f(u)    &{\rm in}\quad \R,    \\
\displaystyle
\\ \frac{\partial u}{\partial \nu}=g(u) &{\rm on}\quad \Gamma_1,\\
\displaystyle
\\ \frac{\partial u}{\partial \nu}=0 &{\rm on}\quad \Gamma_0
\end{array}
\right.
$$
and the second is the nonlinear-Dirichlet boundary value problem
$$
\left\{
  \begin{array}{ll}
  \displaystyle
-\Delta u=f(u)    &{\rm in}\quad \R,    \\
\displaystyle
\\ \frac{\partial u}{\partial \nu}=g(u) &{\rm on}\quad \Gamma_1,\\
\displaystyle
\\ u=0 &{\rm on}\quad \Gamma_0,
\end{array}
\right.
$$
where $\R=\{x\in \mathbb R^N:x_N>0\}$, $\Gamma_1=\{x\in \mathbb
R^N:x_N=0,x_1<0\}$ and $\Gamma_0=\{x\in \mathbb R^N:x_N=0,x_1>0\}$.
We will prove that these problems possess no positive solution under
some assumptions on the nonlinear terms. The main technique we use
is the moving plane method in an integral form.

{\bf keywords:}\;\;Liouville type theorem, Moving plane method, Maximum principle, Mixed boundary value.\\

%\end {abstract}
\vspace{3mm} } \maketitle

\section {\bf Introduction}
In this paper, we study the nonexistence of positive solutions for
the following two mixed boundary value problems with general
nonlinearities. The first one is the nonlinear-Neumann boundary
value problem
\begin{equation}\label{1.1}
 \left\{
  \begin{array}{ll}
  \displaystyle
-\Delta u=f(u)    &{\rm in}\quad \R,    \\
\displaystyle
\\ \frac{\partial u}{\partial \nu}=g(u) &{\rm on}\quad \Gamma_1,\\
\displaystyle
\\ \frac{\partial u}{\partial \nu}=0 &{\rm on}\quad \Gamma_0
\end{array}
\right.
\end{equation}
and the second one is the nonlinear-Dirichlet problem
\begin{equation}\label{1.2}
\left\{
  \begin{array}{ll}
  \displaystyle
-\Delta u=f(u)    &{\rm in}\quad \R,    \\
\displaystyle
\\ \frac{\partial u}{\partial \nu}=g(u) &{\rm on}\quad \Gamma_1,\\
\displaystyle
\\ u=0 &{\rm on}\quad \Gamma_0,
\end{array}
\right.
\end{equation}
where $\R=\{x\in \mathbb R^N:x_N>0\},N\geq 3$, $\Gamma_1=\{x\in
\mathbb R^N:x_N=0,x_1<0\}$ and $\Gamma_0=\{x\in \mathbb
R^N:x_N=0,x_1>0\}$. In the following, we assume that $f,g$ are
continuous functions.

Liouville type theorems are close related to the existence results
and prior estimates for elliptic equations. More precisely, after
blowing up, elliptic equation in bounded domain turns to be an
equation in $\mathbb R^N$ or $\R$. Using the respective Liouville
theorem, we get a contradiction, so the prior estimate is proved.
Then we can use the topological method to prove the existence
results. For more details, we refer to \cite{FLN}\cite{GS} and etc.

In the past few decades, there are plenty of works on the Liouville
type theorems for elliptic equations and elliptic systems. The first
result is \cite{GS1}, in which the authors studied the nonexistence
results for the following elliptic equation
\begin{equation}\label{1.3}
-\Delta u=u^p\quad {\rm in}\quad \mathbb R^N,\quad u\geq 0.
\end{equation}
The authors proved, among other things, that the only solution for
problem (\ref{1.3}) is $u\equiv 0$ provided $0<p<\frac{N+2}{N-2}$.
This result is optimal in the sense that for any $p\geq
\frac{N+2}{N-2}$, there are infinitely many positive solutions to
(\ref{1.3}). Thus the Sobolev exponent $ \frac {N+2}{N-2}$ is the
dividing exponent between existence and nonexistence of positive
solutions. Nonexistence result for problem in half space was also
obtained in \cite{GS}. Later, W.Chen and C.Li proved similar
results by using the moving plane method in \cite{CL}. By using the
moving plane method, the authors proved the solution is symmetric in
every direction and with respect every point, hence the solution
must be $u\equiv 0$. Recently, in an interesting paper \cite{DG},
L.Damascelli and F.Gladiali studied the nonexistence result of
positive solution for the following nonlinear problem with general
nonlinearity
\begin{equation}\label{1.4}
-\Delta u=f(u)\quad {\rm in}\quad \mathbb R^N,\quad u\geq 0.
\end{equation}
If $f$ is assumed to be increasing and subcritical, then the authors
proved the only solution for problem \eqref{1.4} is $u\equiv 0$. The
main tool they used is the method of moving planes. We note that $f$
is only assumed to be continuous in this paper. So we can not
conclude that the weak solutions of problem (\ref{1.4}) are of $C^2$
class. In developing the method of moving plane, the authors in
\cite{DG} used the technique based on integral inequalities, an idea
originally due to S.Terracini's work \cite{Te1} and \cite{Te2}.
After the work of \cite{DG}, a lot of works concern the Liouville
type theorems for elliptic equation with general nonlinearity, see
\cite{GL}\cite{GL1}\cite{GLZ}\cite{LZ}\cite{Yu}\cite{Yu1}\cite{Yu3}\cite{Yu4}.
For Liouville theorem on nonlinear elliptic systems, we refer to
\cite{FF}\cite{Mi}\cite{SZ1}\cite{SZ2}\cite{SZ3}\cite{So} and etc..

In this paper, we are concerned with the nonexistence of positive
solution for the two mixed boundary value problems. Mixed boundary
problems were widely studied in the past few decades. For example,
E.Colorado and I.Peral studied the existence results in \cite{CP}.
The concentration behaviors of singularly perturbed mixed boundary
problems were studied in \cite{AMMP}\cite{AMMP2}\cite{DY} and the
references therein. Nonexistence results for Neumann-Dirichlet mixed
boundary problem have also been obtained in the past, see \cite{BGP}
and \cite{DG}. However, up to our knowledge, it seems to no result
for nonlinear-Neumann and nonlinear-Dirichlet mixed boundary value
problems and this is the main purpose of this paper.

First, we need to define weak solution for problem \eqref{1.1} and problem \eqref{1.2}. Let $E=W_{loc}^{1,2}(\mathbb R^N_+)\cap C^0(\overline{\R})$, we say that $u$ is a weak solution of problem \eqref{1.1}, if $u\in E$ and satisfies
$$
\intr \nabla u\nabla \varphi\,dx=\intr f(u)\varphi\,dx+\int_{\Gamma_1}g(u)\varphi\,dx'
$$
for any $\varphi \in C_c^1(\overline{\R})$. Similarly, we denote by $W=\{\varphi \in C_c^1(\overline {\R}), supt\{\varphi\}\subset A \}$
with $A=\R\cup \Gamma_1$, then we say that $u$ is a weak solution of problem \eqref{1.2}, if $u\in E$ and satisfies
$$
\intr \nabla u\nabla \varphi\,dx=\intr f(u)\varphi\,dx+\int_{\Gamma_1}g(u)\varphi\,dx'
$$
for any $\varphi \in W$.

Our first
result concerns the nonexistence of positive solution for
nonlinear-Neumann mixed boundary value problem, i.e., problem \eqref{1.1}.
Our main result is the following
\begin{theorem}\label{t 1.1}
Let $u\in E$ be a bounded nonnegative solution of
problem \eqref{1.1}, where $f,g:[0,+\infty)\to [0,+\infty)$ are
continuous functions with the properties

(i) $f(t),g(t)$ are nondecreasing in $(0,+\infty)$.

(ii) $h(t)=\frac{f(t)}{t^{\frac{N+2}{N-2}}},k(t)=
\frac{g(t)}{t^{\frac N{N-2}}}$ are nonincreasing in $(0,+\infty)$.

Then  $u\equiv c$ with $f(c)=g(c)=0$.
\end{theorem}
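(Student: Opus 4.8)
The strategy is the moving plane method in integral form (cf.\ \cite{Te1}, \cite{Te2}, \cite{DG}), carried out in the $x_1$-direction. Since a merely bounded solution need not decay at infinity — which would wreck the integral estimates — I would first pass to the Kelvin transform $v(x)=|x|^{2-N}u(x/|x|^{2})$ centred at the origin, a point of the edge $\overline{\Gamma_0}\cap\overline{\Gamma_1}=\{x_N=0,\ x_1=0\}$. The inversion $x\mapsto x/|x|^{2}$ maps $\R$, $\Gamma_0$ and $\Gamma_1$ onto themselves, so $v$ is a positive classical solution in $\R\setminus\{0\}$ of
$$
-\Delta v=|x|^{-(N+2)}f\big(|x|^{N-2}v\big)\ \text{in}\ \R,\qquad \frac{\partial v}{\partial\nu}=|x|^{-N}g\big(|x|^{N-2}v\big)\ \text{on}\ \Gamma_1,\qquad \frac{\partial v}{\partial\nu}=0\ \text{on}\ \Gamma_0 .
$$
Moreover $0<v(x)\leq M|x|^{2-N}$ with $M=\sup u$, so $v$ decays at infinity and has at most a $|x|^{2-N}$ singularity at the origin, while $0<|x|^{N-2}v(x)=u(x/|x|^{2})\leq M$ keeps the arguments of $f$ and $g$ uniformly bounded; in particular $-\Delta v\leq f(M)|x|^{-(N+2)}$ and $\partial_\nu v\leq g(M)|x|^{-N}$, which are integrable near infinity.

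For $\lambda<0$ set $T_\lambda=\{x_1=\lambda\}$, $\Sigma_\lambda=\{x_1<\lambda\}\cap\R$, let $x^\lambda$ be the reflection of $x$ across $T_\lambda$, $v_\lambda(x)=v(x^\lambda)$ and $w_\lambda=v-v_\lambda$; one wants $w_\lambda\leq0$ in $\Sigma_\lambda$. Here $0\notin\overline{\Sigma_\lambda}$, so $v$ is smooth on $\overline{\Sigma_\lambda}$, and the only singularity of $v_\lambda$ is the boundary point $(2\lambda,0,\dots,0)\in\Gamma_1$, near which $v_\lambda\to+\infty$, so $w_\lambda^{+}:=\max(w_\lambda,0)$ vanishes there and the singularity is harmless. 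Testing the equation for $w_\lambda$ against $w_\lambda^{+}$ — which vanishes on $T_\lambda$, while the decay of $v$ kills the flux at infinity — and splitting the boundary integral into its $\Gamma_0$-part (where $\partial_\nu v=0$) and its $\Gamma_1$-part, one is led to
$$
\intl|\nabla w_\lambda^{+}|^{2}\,dx\ \leq\ \intl V_\lambda\,(w_\lambda^{+})^{2}\,dx+\int_{\Gamma_1\cap\partial\Sigma_\lambda}W_\lambda\,(w_\lambda^{+})^{2}\,dx',
$$
where $V_\lambda$ and $W_\lambda$ arise from comparison estimates for the weighted nonlinearities on $\{v>v_\lambda\}$: writing $f(t)=h(t)\,t^{(N+2)/(N-2)}$, $g(t)=k(t)\,t^{N/(N-2)}$ and using that $f,g$ are nondecreasing while $h,k$ are nonincreasing, together with $\sup u<\infty$ and the decay of $v$, one puts $V_\lambda\in L^{N/2}$ and $W_\lambda\in L^{N-1}$. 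By the Sobolev and trace--Sobolev inequalities on $\Sigma_\lambda$ the right-hand side is $\leq C\big(\|V_\lambda\|_{L^{N/2}(\Sigma_\lambda\cap\{v>v_\lambda\})}+\|W_\lambda\|_{L^{N-1}(\Gamma_1\cap\partial\Sigma_\lambda\cap\{v>v_\lambda\})}\big)\int_{\Sigma_\lambda}|\nabla w_\lambda^{+}|^{2}$, and for $\lambda$ very negative these norms are small (the relevant sets recede to infinity, where $v$, hence $V_\lambda,W_\lambda$, decays), forcing $w_\lambda^{+}\equiv0$, i.e.\ $v\leq v_\lambda$ in $\Sigma_\lambda$. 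A standard continuation argument — strong maximum principle and Hopf lemma to get strict inequality (note $v$ cannot be symmetric about any $T_\lambda$ with $\lambda<0$, as that would force a second singularity), plus a measure-smallness step for $\{v>v_{\lambda+\epsilon}\}$ — upgrades this to all $\lambda<0$. Running the same scheme from the right, with $\Sigma_\mu=\{x_1>\mu\}\cap\R$ and $\mu>0$ (where the $\Gamma_0$-boundary term now has the favourable sign), gives $v\leq v_\mu$ in $\Sigma_\mu$ for every $\mu>0$.

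Letting $\lambda\uparrow0$ and $\mu\downarrow0$ yields $v(x_1,x',x_N)=v(-x_1,x',x_N)$, hence, transplanting back, $u(x_1,x',x_N)=u(-x_1,x',x_N)$: $u$ is symmetric about $\{x_1=0\}$. Differentiating in $x_N$ and setting $x_N=0$ gives $\partial_{x_N}u(x_1,x',0)=\partial_{x_N}u(-x_1,x',0)$; for $x_1>0$ the left side vanishes because $(x_1,x',0)\in\Gamma_0$, so $\partial_\nu u=0$ — and therefore $g(u)=0$ — on $\Gamma_1$ as well. Thus $\partial_\nu u\equiv0$ on the whole hyperplane $\{x_N=0\}$, and the even reflection $\tilde u$ of $u$ across it is a bounded nonnegative weak solution of $-\Delta\tilde u=f(\tilde u)$ in $\mathbb R^N$. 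By the Liouville theorem for this equation under hypotheses (i)--(ii) (cf.\ \cite{DG}), $\tilde u\equiv c$ with $f(c)=0$; hence $u\equiv c$, and since $g$ vanishes at the boundary value $c$ we get $g(c)=0$, as claimed.

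The crux is the second step, for three intertwined reasons. (a) The comparison estimates for the weighted nonlinearities force one to use both monotonicity hypotheses (i) and (ii) simultaneously and to track how the Kelvin weights $|x|^{-(N+2)}$, $|x|^{-N}$ transform under $x\mapsto x^\lambda$ — this is where the subcriticality really enters and where I expect the main technical work. (b) The mixed boundary makes the $\Gamma_1$-integral only partly a ``difference'': the portion of $\Gamma_1$ reflected onto $\Gamma_0$ leaves a bare term $\int|x|^{-N}g(|x|^{N-2}v)\,w_\lambda^{+}$ of unfavourable sign for the left problem, and it is precisely the decay of $v$ near infinity that makes it small enough to start and to continue the scheme — unlike in the purely Neumann or purely Dirichlet cases. (c) One must handle the boundary singularity of $v_\lambda$ and verify that the continuation survives all the way to $\lambda\uparrow0$ (and $\mu\downarrow0$). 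The conformal geometry of the Kelvin transform, the trace--Sobolev inequality, and the final reflection plus appeal to the whole-space Liouville theorem are routine.
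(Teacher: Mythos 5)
Your overall architecture (one Kelvin transform at the edge point $0$, a two-sided sweep in $x_1$ to get symmetry of $u$ about $\{x_1=0\}$, then even reflection across $\{x_N=0\}$ and the whole-space Liouville theorem) is genuinely different from the paper, which never proves symmetry in $x_1$: it uses Kelvin transforms centred at \emph{every} boundary point $p_\mu=(\mu,0,\dots,0)$, sweeps planes only from $+\infty$ towards $\lambda=\mu$, and for $\mu<0$ runs a stepwise bootstrap ($\mu_{k+1}=\frac{\mu_k-\sqrt{\mu_k^2+2}}{2}\to-\infty$) to obtain \emph{monotonicity} of $u$ in $x_1$; it then moves planes in the $x_2,\dots,x_{N-1}$ directions to reduce to either $u\in L^{2N/(N-2)}(\R)$ (incompatible with monotonicity unless $u\equiv0$) or $u=u(x_1,x_N)$, and finishes with the limit ODE $w''=-f(w)$, $w'(0)=-g(w(0))$.

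The difficulty is that your left-hand sweep does not work, and you have put your finger on exactly the spot where it breaks without resolving it. For $\lambda<0$ and $\Sigma_\lambda=\{x_1<\lambda\}\cap\R$, on the portion $\{x_1<2\lambda,\ x_N=0\}$ the point $x$ carries the nonlinear condition while its reflection $x^\lambda$ lies in (the image of) $\Gamma_0$ and carries the homogeneous Neumann condition, so testing with $w_\lambda^+$ produces the boundary contribution $\int k(|x|^{N-2}v)\,v^{N/(N-2)}\,w_\lambda^+\,dx'$, and after subtracting the absorbable part $C\,k\,v^{2/(N-2)}(v-v_\lambda)^+w_\lambda^+$ you are left with the bare term $\int k(|x|^{N-2}v)\,v_\lambda^{N/(N-2)}\,w_\lambda^+\,dx'$, which is \emph{linear} in $w_\lambda^+$ with a coefficient that is not a difference and does not vanish on $\{w_\lambda^+>0\}$. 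The integral-form moving plane method only closes when the right-hand side is quadratic in $w_\lambda^+$, so that a small coefficient can be absorbed into the Dirichlet energy and force it to vanish; with an inhomogeneous term of the wrong sign you merely get $\int_{\Sigma_\lambda}|\nabla w_\lambda^+|^2\le C\int k\,v_\lambda^{N/(N-2)}w_\lambda^+\,dx'$, which is small but not zero, and neither the smallness of $|x|^{-N}$ at infinity nor a continuation/measure argument repairs this. The obstruction is not merely technical: the inequality your left sweep is meant to prove, $u(x_1,y)\le u(-x_1,y)$ for $x_1<0$, goes against the monotone structure of the problem (the nonnegative flux $g(u)$ sits on $\Gamma_1$, the left half of the boundary, which pushes $u$ to be \emph{larger} there; the paper proves precisely the opposite monotonicity, $u$ nondecreasing as $x_1$ decreases), so establishing it is essentially equivalent to the full rigidity statement and cannot come out of a one-sided comparison. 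This is why the paper, when $\mu<0$, restricts the planes to $\mu<\lambda\le\mu-\frac1{2\mu}$ — exactly the range in which nonlinear-condition boundary points reflect onto nonlinear-condition points, so no bare term appears — and compensates with the iterative descent in $\mu$, and why it replaces your symmetry step by monotonicity plus moving planes in the remaining tangential directions. Your right-hand sweep ($\mu>0$) is sound and coincides with the paper's $\mu=0$ case, and the final reflection/whole-space-Liouville step would be fine if symmetry were available, but as it stands the left half of your argument has no valid mechanism.
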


Next, we study the mixed nonlinear-Dirichlet mixed boundary value
problem \eqref{1.2}. Our second conclusion is the following
\begin{theorem}\label{t 1.2}
Let $u\in E$ be a bounded nonnegative solution of
problem \eqref{1.2}, where $f,g:[0,+\infty)\to [0,+\infty)$ are
continuous functions with the properties

(i) $f(t),g(t)$ are nondecreasing in $(0,+\infty)$.

(ii) $h(t)=\frac{f(t)}{t^{\frac{N+2}{N-2}}},k(t)=
\frac{g(t)}{t^{\frac N{N-2}}}$ are nonincreasing in $(0,+\infty)$.

Then  $u\equiv 0$.
\end{theorem}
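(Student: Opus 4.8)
\emph{Strategy and Step 1 (Kelvin transform, dichotomy).} The plan is to adapt the moving plane method in integral form of Terracini and of Damascelli--Gladiali \cite{DG} to the mixed--boundary domain $\R$. Since $u$ is bounded and $f,g$ are continuous, $f(u),g(u)$ are bounded; as $f,g\ge 0$ the function $u$ is superharmonic in $\R$, so by the strong maximum principle either $u\equiv 0$ (and we are done) or $u>0$ in $\R$. Assume the latter and set $v(x)=|x|^{2-N}u(x/|x|^2)$. Because inversion about the origin maps $\R$, $\Gamma_0$ and $\Gamma_1$ onto themselves, $v$ is a positive solution of
$$
-\Delta v=h(|x|^{N-2}v)\,v^{\frac{N+2}{N-2}}\ \text{ in }\R,\qquad \frac{\partial v}{\partial\nu}=k(|x|^{N-2}v)\,v^{\frac{N}{N-2}}\ \text{ on }\Gamma_1,\qquad v=0\ \text{ on }\Gamma_0,
$$
with $v(x)=O(|x|^{2-N})$ as $|x|\to\infty$ and with an integrable singularity at $0$; by hypothesis (ii) the weights $h,k$ are nonincreasing. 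The decay gives $v\in L^{\frac{2N}{N-2}}(\R)$ and a corresponding integrability of its boundary trace on $\Gamma_1$, which is what will let the moving plane procedure start.

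\emph{Step 2 (integral inequality along $\{x_1=\lambda\}$, $\lambda\le 0$).} Write $x=(x_1,x'')$, $x^\lambda=(2\lambda-x_1,x'')$, $v_\lambda(x)=v(x^\lambda)$, $w_\lambda=v_\lambda-v$, $\Sigma_\lambda=\{x\in\R:x_1<\lambda\}$. For $\lambda\le 0$ one has $\Sigma_\lambda\cap\overline{\Gamma_0}=\emptyset$ and $|x^\lambda|\le |x|$ on $\Sigma_\lambda$. Testing the weak formulations of the equations for $v$ and $v_\lambda$ against $w_\lambda^+$, extended by $0$ outside $\Sigma_\lambda$ (admissible since $w_\lambda^+$ is supported away from $\Gamma_0$), and subtracting, one gets
$$
\int_{\Sigma_\lambda}|\nabla w_\lambda^+|^2\,dx=\int_{\Sigma_\lambda}\big(\bar f(x^\lambda,v_\lambda)-\bar f(x,v)\big)w_\lambda^+\,dx+\int_{\Gamma_1\cap\partial\Sigma_\lambda}\big(\partial_\nu v(x^\lambda)-\bar g(x,v)\big)w_\lambda^+\,dx',
$$
where $\bar f,\bar g$ are the right--hand sides above. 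On the part of $\Gamma_1\cap\partial\Sigma_\lambda$ whose reflection lies in $\Gamma_0$ one has $\partial_\nu v(x^\lambda)\le 0$ by the Hopf lemma while $\bar g(x,v)\ge 0$, so that contribution is $\le 0$ and is discarded; on the remaining part $\partial_\nu v(x^\lambda)=\bar g(x^\lambda,v_\lambda)$. Using $|x^\lambda|\le |x|$ and the monotonicity of $h,k$ to bound, on $\{w_\lambda>0\}$, the two nonlinear differences by $C\,v^{\frac4{N-2}}w_\lambda^+$ and $C\,v^{\frac2{N-2}}w_\lambda^+$, then applying the Sobolev inequality on $\Sigma_\lambda$ and its trace form on $\Gamma_1$, one arrives at
$$
\int_{\Sigma_\lambda}|\nabla w_\lambda^+|^2\,dx\le C\Big(\|v\|_{L^{\frac{2N}{N-2}}(\Sigma_\lambda\cap\{w_\lambda>0\})}^{\frac4{N-2}}+\|v\|_{L^{\frac{2(N-1)}{N-2}}(\Gamma_1\cap\{w_\lambda>0\})}^{\frac2{N-2}}\Big)\int_{\Sigma_\lambda}|\nabla w_\lambda^+|^2\,dx .
$$

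\emph{Step 3 (moving the plane and conclusion).} By Step 1 the coefficient on the right is $<1$ once $\lambda$ is sufficiently negative, so $w_\lambda^+\equiv 0$, i.e. $v\le v_\lambda$ in $\Sigma_\lambda$, for such $\lambda$. A standard continuity argument propagates this to all $\lambda\le 0$: it cannot fail at some $\lambda_0\le 0$, for then $w_{\lambda_0}\equiv 0$ would make $v$ symmetric about $\{x_1=\lambda_0\}$, which is incompatible with the $x$--dependence of the weight $h(|x|^{N-2}\,\cdot\,)$ and with the lack of symmetry of the problem in $x_1$ (reflection swaps $\Gamma_0$ and $\Gamma_1$), unless $v\equiv 0$. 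Hence $v(x^\lambda)\ge v(x)$ on $\Sigma_\lambda$ for every $\lambda\le 0$; taking $\lambda=0$ gives $v(-x_1,x'')\ge v(x_1,x'')$ whenever $x_1<0$. Undoing the Kelvin transform, this reads $u(\bar p)\ge u(p)$ for all $p\in\R$ with $p_1<0$, where $\bar p$ is the reflection of $p$ in $\{p_1=0\}$. Letting $p$ tend to a point of $\Gamma_1$, $\bar p$ tends to the mirror point of $\Gamma_0$, where $u=0$; continuity of $u$ up to the boundary forces $u=0$ on $\Gamma_1$. Thus $u=0$ on all of $\partial\R$, and the Liouville theorem for the Dirichlet problem $-\Delta u=f(u)$ in $\R$, $u=0$ on $\partial\R$, under (i)--(ii) (which is proved along the same lines, via a Kelvin transform centred at a boundary point followed by the moving plane method), yields $u\equiv 0$.

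\emph{Main obstacle.} The soft moving plane machinery is routine; the real work is the mixed boundary. One must (a) set up the Kelvin transform and establish the decay and integrability of $v$ and of its trace --- here the Dirichlet datum on $\Gamma_0$ is precisely what rules out a spurious bounded harmonic part; (b) handle the extra boundary integral over $\Gamma_1$, which has no analogue in the interior problem \eqref{1.4}, through a trace Sobolev inequality, while carefully splitting $\Gamma_1\cap\partial\Sigma_\lambda$ according to whether its reflection lands in $\Gamma_1$ (a genuine Neumann term, controlled by the monotonicity of $k$ and $|x^\lambda|\le|x|$) or in $\Gamma_0$ (a Hopf--sign term that is discarded); and (c) justify the base Dirichlet half--space Liouville theorem invoked at the end. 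Step (b), the interplay between the Neumann face $\Gamma_1$ and the Dirichlet face $\Gamma_0$ under reflection in $\{x_1=\lambda\}$, is the crux.
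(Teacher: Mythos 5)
The central step of your argument has the inequality pointing the wrong way, and the sign bookkeeping that makes it look provable is internally inconsistent. You set $w_\lambda=v_\lambda-v$ but conclude from $w_\lambda^+\equiv 0$ that ``$v\le v_\lambda$''; these are opposite statements. If you really test with $(v-v_\lambda)^+$ in $\Sigma_\lambda=\{x_1<\lambda\}$, $\lambda\le 0$ (the only choice compatible with your final claim $u(\bar p)\ge u(p)$ for $p_1<0$), then on the portion of $\{x_N=0,\ x_1<\lambda\}$ whose reflection lands in $\Gamma_0$ you have $v_\lambda=0$, hence $(v-v_\lambda)^+=v>0$ there, and the boundary contribution is $\int\big(k(|x|^{N-2}v)\,v^{\frac N{N-2}}-\partial_\nu v(x^\lambda)\big)\,v\,dx'$ with $-\partial_\nu v(x^\lambda)>0$ by Hopf: a strictly positive term with no smallness, which cannot be absorbed into the gradient term, so the integral inequality does not close. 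If instead you test with $(v_\lambda-v)^+$ (the only choice for which your ``Hopf sign, discard'' remark is correct, since then the test function vanishes where $v_\lambda=0$), the interior comparison fails: on $\{v_\lambda>v\}$ one has $|x^\lambda|\le|x|$ but $v_\lambda>v$, so monotonicity of $h$ gives no comparison between $h(|x|^{N-2}v)$ and $h(|x^\lambda|^{N-2}v_\lambda)$; moreover the conclusion it would aim at is $v_\lambda\le v$, exactly the opposite of what your Step 3 uses. Indeed the inequality you want, $u(p)\le u(\bar p)$ for $p_1<0$, is the wrong-way comparison for this problem: the nonnegative flux $g(u)$ on $\Gamma_1$ and the Dirichlet condition on $\Gamma_0$ push $u$ up on the $\Gamma_1$ side, and what can actually be proved (and what the paper proves) is the reverse monotonicity, namely that $u$ is nondecreasing as $x_1$ decreases. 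Hence the shortcut ``$u=0$ on $\Gamma_1$, then quote a Dirichlet half-space Liouville theorem'' collapses at its first step.

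Beyond this, a single Kelvin transform centred at the origin cannot deliver enough information even in the correct direction. The paper must run the construction for every centre $p_\mu=(\mu,0,\dots,0)$, with a separate and genuinely delicate treatment for $\mu<0$ (the centre is advanced only stepwise, $\mu_k\to-\infty$, using the trace-type inequality of Lemma \ref{t 3.3}, because the nonlinear Neumann part of $\partial\Sigma_\lambda$ is no longer sign-favourable), in order to obtain monotonicity in $x_1$ for all reflections. It then repeats the moving plane argument in the $x_2,\dots,x_{N-1}$ directions to reach the dichotomy: either some plane stops at $\lb_1\neq\mu$, in which case $u\in L^{\frac{2N}{N-2}}(\R)$ and the monotonicity in $x_1$ forces $u\equiv 0$; or $u=u(x_1,x_N)$, in which case the limit $w(x_N)=\lim_{x_1\to-\infty}u(x_1,x_N)$ solves a one-dimensional problem whose concavity forces $w\equiv c$ with $f(c)=g(c)=0$, and the Dirichlet condition on $\Gamma_0$ then gives $u\equiv 0$. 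None of these steps appears in your proposal, and they are where the actual work of the theorem lies.
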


\begin{remark}
By the Doubling Lemma in \cite{PQS}, the boundedness assumptions in
Theorem \ref{1.1} and Theorem \ref{1.2} can be dropped.
\end{remark}

Here we should emphasize that we only need that the nonlinearities
are continuous in the above theorems, we don't need they are
Lipschitz continuous. Since no Lipschitz assumption was made on the
nonlinearities, the usual maximum principle does not work. We
resorted to some integral inequality, which was first introduced by
S.Terracini in \cite{Te1}\cite{Te2} and then was widely used in
\cite{DG}\cite{GL}\cite{GL1} and \cite{GLZ} respectively. By the
same sprit of this, X.Yu studied the nonlinear Liouville type
theorems for other equations in \cite{Yu}\cite{Yu1}\cite{Yu3}\cite{Yu4} and
\cite{Yu5}. The main idea of this paper is the same as the above
works, we use integral inequality to substitute the usual maximum
principle.

The rest of this paper is devoted the proof of Theorem \ref{t 1.1}
and Theorem \ref{t 1.2}. We divide the proof of these theorems into
the following steps. First, we show that the nonnegative solutions of
problems \eqref{1.1} and \eqref{1.2} are nondecreasing as $x_1$
decreases. Next, we show that the nonnegative solutions either depend
only on $x_1$ and $x_N$ or are regular at infinity. Finally, if $u$
depends only on $x_1$ and $x_N$, then the limit function $w(x_N)=\lim_{x_1\to
-\infty}u(x_1,x_N)$ exists and $w(x_N)$ satisfies a proper equation
and it can only be the trivial solution under the assumptions in
Theorem \ref{t 1.1} and Theorem \ref{t 1.2}. On the other hand, if
the solution is regular at infinity, the we get a contradiction
directly from the monotonicity of $u$ in $x_1$ direction. In the
rest of this paper, we denote $C$ by a positive constant, which may
vary from line to line.

\section{\bf Proof of Theorem \ref{t 1.1}}
The key step is to prove the monotonicity of the nonnegative solutions
for problem \eqref{1.1} in the $x_1$ direction. We use the moving
plane method to prove our result. The first step of moving plane is
to show that this procedure can be started at some point. Since we
don't know the decay behaviors of $u$, it seems difficult to use
this method directly on $u$. So we make use of the Kelvin
transformation $v$ of $u$. More precisely, for any $\mu \in \mathbb
R$, we denote $p_\mu=(\mu,0,...,0)\in \BR$ and define the Kelvin
transformation $v_\mu$ of $u$ at $p_\mu$ as
\begin{equation}\label{2.1}
v_\mu(x)=\frac
1{|x-p_\mu|^{N-2}}u(\frac{x-p_\mu}{|x-p_\mu|^2}+p_\mu).
\end{equation}
Then we deduce from the definition of Kelvin transformation that
$v_\mu(x)$ decays at the rate of $|x-p_\mu|^{2-N}$ as $|x|\to
\infty$. In particular, we have
\begin{equation}\label{2.2}
v_\mu(x)\in L^{2^*}\cap L^\infty(\R\setminus B_r(p_\mu))
\end{equation}
for any $r>0$, where $2^*=\frac{2N}{N-2}$ is the usual Sobolev
critical exponent. Moreover, a direct calculation shows that $v_\mu$
satisfies the following equation
\begin{equation}\label{2.3}
 \left\{
\begin{array}{ll}
-\Delta v_\mu=\frac{f(|x-p_\mu|^{N-2}v_\mu(x))}{[|x-p_\mu|^{N-2}v_\mu(x)]^{\frac{N+2}{N-2}}}v_\mu(x)^{\frac{N+2}{N-2}}, & \ x\in \R,\\
\\ \frac{\partial v_\mu}{\partial \nu}=\frac{g(|x-p_\mu|^{N-2}v_\mu(x))}{[|x-p_\mu|^{N-2}v_\mu(x)]^{\frac{N}{N-2}}}v_\mu(x)^{\frac{N}{N-2}}, & \ x\in \{x\in \partial \R:\mu-\frac
1\mu<x_1<\mu\},\\
\\ \frac{\partial v_\mu}{\partial \nu}=0,& \ x\in \{x\in \partial \R:x_1>\mu\ {\rm or}\ x_1<\mu-\frac
1\mu\}
\end{array}
\right.
\end{equation}
for $\mu\geq 0$, where we interpret $\frac 1\mu=+\infty$ for
$\mu=0$. However, for $\mu<0$, $v_\mu$ satisfies
\begin{equation}\label{2.4}
 \left\{
\begin{array}{ll}
-\Delta v_\mu=\frac{f(|x-p_\mu|^{N-2}v_\mu(x))}{[|x-p_\mu|^{N-2}v_\mu(x)]^{\frac{N+2}{N-2}}}v_\mu(x)^{\frac{N+2}{N-2}}, & \ x\in \R,\\
\\ \frac{\partial v_\mu}{\partial \nu}=\frac{g(|x-p_\mu|^{N-2}v_\mu(x))}{[|x-p_\mu|^{N-2}v_\mu(x)]^{\frac{N}{N-2}}}v_\mu(x)^{\frac{N}{N-2}}, & \ x\in\{x\in \partial \R:x_1<\mu\ {\rm or}\ x_1>\mu-\frac
1\mu\}, \\
\\ \frac{\partial v_\mu}{\partial \nu}=0, & \ x\in \{x\in \partial \R:\mu<x_1<\mu-\frac
1\mu\}.
\end{array}
\right.
\end{equation}

Moreover, we infer from the definitions of $h$ and $k$ that $v_\mu$
satisfies
\begin{equation}\label{2.5}
 \left\{
\begin{array}{ll}
-\Delta v_\mu=h(|x-p_\mu|^{N-2}v_\mu(x))v_\mu(x)^{\frac{N+2}{N-2}}, & \ x\in \R,\\
\\ \frac{\partial v_\mu}{\partial \nu}=k(|x-p_\mu|^{N-2}v_\mu(x))v_\mu(x)^{\frac{N}{N-2}}, & \ x\in \{x\in \partial \R:\mu-\frac
1\mu<x_1<\mu\},\\
\\ \frac{\partial v_\mu}{\partial \nu}=0, & \ x\in \{x\in \partial \R:x_1>\mu\ {\rm or}\ x_1<\mu-\frac
1\mu\}
\end{array}
\right.
\end{equation}
for $\mu\geq 0$ and $v_\mu$ satisfies
\begin{equation}\label{2.6}
 \left\{
\begin{array}{ll}
-\Delta v_\mu=h(|x-p_\mu|^{N-2}v_\mu(x))v_\mu(x)^{\frac{N+2}{N-2}}, & \ x\in \R,\\
\\ \frac{\partial v_\mu}{\partial \nu}=k(|x-p_\mu|^{N-2}v_\mu(x))v_\mu(x)^{\frac{N}{N-2}}, & \ x\in\{x\in \partial \R:x_1<\mu\ {\rm or}\ x_1>\mu-\frac
1\mu\}, \\
\\ \frac{\partial v_\mu}{\partial \nu}=0,& \ x\in \{x\in \partial \R:\mu<x_1<\mu-\frac
1\mu\}
\end{array}
\right.
\end{equation}
for $\mu<0$.

We first study the case $\mu>0$. We will show that the moving plane
procedure can be carried out from $\infty$ to $\lb=\mu$. For this
purpose, we define
$$
\Sigma_\lb=\{x\in \R:x_1>\lb\},\ T_\lb=\{x\in \R:x_1=\lb\}
$$
and
$$
\partial \Sigma_\lb^1=\{x\in \partial\Sigma_\lb:x_N=0\}.
$$
Moreover, for any $x\in \Sigma_\lambda$, the reflection of $x$ with
respect to $T_\lambda$ is
$$
x^\lambda=\{2\lambda-x_1,x_2,...,x_N\}.
$$
In the following, we denote by $u^\lambda(x)=u(x^\lambda)$ and
$p_\mu^\lambda=\{2\lambda-\mu, 0,...,0\}$. With the above notations,
we have the following key lemma.
\begin{lemma}\label{t 2.1}
For any fixed $\lambda>\mu\geq 0$, the functions $v_\mu$ and
$(v_\mu-v_\mu^\lambda)^+$ belong to $ L^{2^*}(\Sigma_\lambda)\cap
L^\infty(\Sigma_\lambda)$ with $2^*=\frac{2N}{N-2}$. Further more,
if we denote $A_\mu^\lambda=\{x\in \Sigma_\lambda|
v_\mu>v_\mu^\lambda\}$ and $B_\mu^\lambda=\{x\in \partial
\Sigma_\lambda^1|v_\mu(x)>v_\mu^\lambda(x)\}$, then there exists
$C_\lambda>0$, which is nonincreasing in $\lambda$, such that
\begin{equation}\label{2.7}
\int_{\Sigma_\lambda}|\nabla (v_\mu-v_\mu^\lambda)^+|^2\,dx\leq
C_\lambda(\int_{A_\mu^\lambda}\frac
1{|x-p_\mu|^{2N}}\,dx)^{\frac2N}\cdot(\int_{\Sigma_\lambda}|\nabla(v_\mu-v_\mu^\lambda)^+|^2\,dx).
\end{equation}
\end{lemma}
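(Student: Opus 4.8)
The plan is to derive the differential inequality by testing the equation \eqref{2.5} satisfied by $v_\mu$ against the function $\varphi=(v_\mu-v_\mu^\lambda)^+$, exploiting the fact that this test function vanishes on $T_\lambda$ and is compactly supported in the relevant sense after the Kelvin transform has been applied. First I would record the integrability claims: since $\lambda>\mu\geq 0$, the pole $p_\mu$ lies outside $\overline{\Sigma_\lambda}$, so $p_\mu\in \mathbb R^N\setminus \overline{B_r(p_\mu)}$ fails — rather, $\Sigma_\lambda$ stays at positive distance from $p_\mu$, hence \eqref{2.2} gives $v_\mu\in L^{2^*}(\Sigma_\lambda)\cap L^\infty(\Sigma_\lambda)$ directly; the reflected function $v_\mu^\lambda$ has the same property (its pole $p_\mu^\lambda$ also avoids $\Sigma_\lambda$ when $\lambda>\mu$), and therefore so does the truncation $(v_\mu-v_\mu^\lambda)^+$. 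This also justifies that $(v_\mu-v_\mu^\lambda)^+$ is an admissible test function in the weak formulation on $\Sigma_\lambda$.

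Next I would subtract the weak equations for $v_\mu$ and $v_\mu^\lambda$ on $\Sigma_\lambda$, using $\varphi=(v_\mu-v_\mu^\lambda)^+$ as test function. On the bulk term this produces
$$
\int_{\Sigma_\lambda}|\nabla(v_\mu-v_\mu^\lambda)^+|^2\,dx
=\int_{A_\mu^\lambda}\bigl[h(|x-p_\mu|^{N-2}v_\mu)v_\mu^{\frac{N+2}{N-2}}-h(|x^\lambda-p_\mu|^{N-2}v_\mu^\lambda)(v_\mu^\lambda)^{\frac{N+2}{N-2}}\bigr](v_\mu-v_\mu^\lambda)\,dx
$$
plus the analogous boundary term on $B_\mu^\lambda$. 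The key monotonicity input is: on $A_\mu^\lambda$ we have $v_\mu>v_\mu^\lambda>0$, and since $|x-p_\mu|\le |x^\lambda-p_\mu|$ for $x\in\Sigma_\lambda$ (reflection moves points away from the pole, as $p_\mu$ lies on the $\mu$-side), the argument $|x-p_\mu|^{N-2}v_\mu$ compares to $|x^\lambda-p_\mu|^{N-2}v_\mu^\lambda$... here I must be careful, because $v_\mu>v_\mu^\lambda$ but $|x-p_\mu|<|x^\lambda-p_\mu|$ pull in opposite directions. The right move is to use that $h=f(t)/t^{(N+2)/(N-2)}$ is nonincreasing together with $f$ nondecreasing to bound $h(|x-p_\mu|^{N-2}v_\mu)v_\mu^{(N+2)/(N-2)}\le h(|x^\lambda-p_\mu|^{N-2}v_\mu)v_\mu^{(N+2)/(N-2)}$ is false in general; instead one writes $f(|x-p_\mu|^{N-2}v_\mu)\le f(|x^\lambda-p_\mu|^{N-2}v_\mu)$ (by $f$ nondecreasing and $|x-p_\mu|\le|x^\lambda-p_\mu|$), then divides by $(|x-p_\mu|^{N-2}v_\mu)^{(N+2)/(N-2)}$ and uses that $h$ is nonincreasing to replace the remaining $|x-p_\mu|$-weight, ending with a clean bound of the integrand by $C\,|x-p_\mu|^{-4}\,\bigl((v_\mu^{(N+2)/(N-2)}-(v_\mu^\lambda)^{(N+2)/(N-2)})\bigr)(v_\mu-v_\mu^\lambda)$ on $A_\mu^\lambda$, and similarly with exponent $N/(N-2)$ and weight $|x-p_\mu|^{-2}$ on the boundary piece $B_\mu^\lambda$. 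Then I apply the elementary inequality $(a^p-b^p)(a-b)\le p\,a^{p-1}(a-b)^2$ for $a>b>0$ (with $p=(N+2)/(N-2)$, resp. $p=N/(N-2)$), and bound $v_\mu^{4/(N-2)}\le C$ on $A_\mu^\lambda$ using the $L^\infty$ bound from \eqref{2.2}, absorbing the $|x-p_\mu|^{-4}$ into a single weight $|x-p_\mu|^{-2N}$ only after a Hölder split; more precisely I estimate $\int_{A_\mu^\lambda}|x-p_\mu|^{-4}((v_\mu-v_\mu^\lambda)^+)^2\,dx$ by Hölder with exponents $N/2$ and $N/(N-2)$ to get $(\int_{A_\mu^\lambda}|x-p_\mu|^{-2N}\,dx)^{2/N}\,\|(v_\mu-v_\mu^\lambda)^+\|_{L^{2^*}(\Sigma_\lambda)}^2$, and then invoke the Sobolev trace/embedding inequality on the half-space $\Sigma_\lambda$ to replace $\|(v_\mu-v_\mu^\lambda)^+\|_{L^{2^*}}^2$ (and the boundary $L^{2(N-1)/(N-2)}$ norm) by $C\int_{\Sigma_\lambda}|\nabla(v_\mu-v_\mu^\lambda)^+|^2\,dx$.

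Putting these together yields \eqref{2.7} with $C_\lambda$ a product of the $L^\infty$ bound of $v_\mu$, the Sobolev constant of the half-space (which does not grow with $\lambda$), and constants depending only on $N$; monotonicity of $C_\lambda$ in $\lambda$ follows because $\Sigma_\lambda$ shrinks as $\lambda$ increases, so the relevant Sobolev and $L^\infty$ constants are nonincreasing. The main obstacle I anticipate is the bookkeeping in the nonlinear comparison on $A_\mu^\lambda$: one has to combine the monotonicity of $f$ (resp. $g$) with the monotonicity of $h$ (resp. $k$) and with the geometric inequality $|x-p_\mu|\le|x^\lambda-p_\mu|$ in the correct order so that the leftover weight is exactly $|x-p_\mu|^{-4}$ (resp. $|x-p_\mu|^{-2}$ on the boundary) — the power that makes the subsequent Hölder exponent land precisely on $|x-p_\mu|^{-2N}$ and thus reproduces the stated inequality. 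The boundary term requires the fractional Sobolev trace inequality $\|\varphi\|_{L^{2(N-1)/(N-2)}(\partial\Sigma_\lambda^1)}\le C\|\nabla\varphi\|_{L^2(\Sigma_\lambda)}$ for $\varphi$ vanishing on $T_\lambda$, which is standard but should be cited; everything else is routine truncation and Hölder.
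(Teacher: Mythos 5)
Two structural points go wrong before your estimates even start. First, your claim that the pole of $v_\mu^\lambda$ avoids $\Sigma_\lambda$ is false: the reflected pole is $p_\mu^\lambda=(2\lambda-\mu,0,\dots,0)$, whose first coordinate $2\lambda-\mu>\lambda$, so it lies on $\partial\Sigma_\lambda^1\subset\overline{\Sigma_\lambda}$ and $v_\mu^\lambda$ is singular there (the positive part $(v_\mu-v_\mu^\lambda)^+$ is still bounded, but only because it vanishes near that singularity, not because $v_\mu^\lambda$ is bounded). Consequently $(v_\mu-v_\mu^\lambda)^+$ cannot simply be declared an admissible test function: the equation for $v_\mu-v_\mu^\lambda$ holds only in $\Sigma_\lambda\setminus\{p_\mu^\lambda\}$, and $(v_\mu-v_\mu^\lambda)^+$ is neither compactly supported nor known a priori to lie in $W^{1,2}(\Sigma_\lambda)$. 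The paper's proof therefore tests with $\eta_\ep^2(v_\mu-v_\mu^\lambda)^+$, where $\eta_\ep$ cuts off both a small ball around $p_\mu^\lambda$ and a neighbourhood of infinity, and disposes of the error term $I_\ep$ via $\int|\nabla\eta_\ep|^N\le C$, H\"older and $(v_\mu-v_\mu^\lambda)^+\in L^{2^*}(\Sigma_\lambda)$; omitting this cut-off/limit step is a genuine gap. Second, for $\mu\ge 0$ and $\lambda>\mu$ there is no boundary term to estimate at all: on $\partial\Sigma_\lambda^1$ one has $x_1>\lambda>\mu$, so $\frac{\partial v_\mu}{\partial\nu}=0$ by \eqref{2.5}, while $\frac{\partial v_\mu^\lambda}{\partial\nu}(x)=\frac{\partial v_\mu}{\partial\nu}(x^\lambda)\ge 0$ because $g\ge 0$; hence $\frac{\partial (v_\mu-v_\mu^\lambda)}{\partial\nu}\le 0$ and the boundary integral is simply dropped. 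Your plan keeps a $B_\mu^\lambda$-term and estimates it with $k$ and a trace inequality, which would at best prove the weaker inequality \eqref{2.15} of Lemma \ref{t 2.4} (the $\mu<0$ case), not the stated \eqref{2.7}.

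The nonlinear comparison is also built on a reversed geometric fact. For $x\in\Sigma_\lambda$ with $\lambda>\mu$ the reflection moves $x$ toward the pole: $|x^\lambda-p_\mu|^2=|x-p_\mu|^2-4(x_1-\lambda)(\lambda-\mu)\le |x-p_\mu|^2$, the opposite of what you assert. Because of this there is no tension between $v_\mu>v_\mu^\lambda$ and the geometry: on $A_\mu^\lambda$ both facts point the same way and give $|x^\lambda-p_\mu|^{N-2}v_\mu^\lambda\le |x-p_\mu|^{N-2}v_\mu$, so $h$ nonincreasing alone yields $h(|x^\lambda-p_\mu|^{N-2}v_\mu^\lambda)\ge h(|x-p_\mu|^{N-2}v_\mu)$ and hence the bound by $h(|x-p_\mu|^{N-2}v_\mu)\bigl[v_\mu^{\frac{N+2}{N-2}}-(v_\mu^\lambda)^{\frac{N+2}{N-2}}\bigr]$; the chain you commit to, starting from $f(|x-p_\mu|^{N-2}v_\mu)\le f(|x^\lambda-p_\mu|^{N-2}v_\mu)$ ``by $|x-p_\mu|\le|x^\lambda-p_\mu|$'', rests on the false inequality and is unnecessary. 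Finally the weight bookkeeping is off: the factor $|x-p_\mu|^{-4}$ does not come out of the $h$-replacement, since $h(|x-p_\mu|^{N-2}v_\mu(x))=h\bigl(u(p_\mu+\frac{x-p_\mu}{|x-p_\mu|^2})\bigr)$ with the argument ranging over a fixed compact region, so it is only bounded by a constant $C_\lambda$; the decay comes from the Kelvin transform itself, $v_\mu^{\frac{4}{N-2}}\le C|x-p_\mu|^{-4}$ on $\Sigma_\lambda$. If you bound $v_\mu^{4/(N-2)}$ merely by its $L^\infty$ norm, as you propose, you forfeit this decay and H\"older no longer produces the factor $\bigl(\int_{A_\mu^\lambda}|x-p_\mu|^{-2N}\,dx\bigr)^{2/N}$. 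With these corrections your outline reduces to the paper's argument (H\"older with exponents $N/2$ and $N/(N-2)$, then Sobolev), but as written it misstates the geometry, mishandles the boundary term, and skips the cut-off argument that legitimizes the test function.
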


\begin{proof}
Since $\lambda>\mu$, there exists $r>0$ such that
$\Sigma_\lambda\subset \R\setminus B_r(p_\mu)$. Hence, we deduce
from equation \eqref{2.2} and the decay behavior of $v_\mu$ that
$$
v_\mu,(v_\mu-v_\mu^\lambda)^+\in L^{2^*}(\Sigma_\lambda)\cap
L^\infty(\Sigma_\lambda).
$$
Now we choose a cut off function $\eta=\eta_\ep\in C(\overline \R, [0,1])$ such that
 $$
\eta(x)= \left\{
\begin{array}{ll}
1 & \quad {\rm for}\ 2\ep\leq |x-p_\mu^\lambda|\leq \frac 1\ep,\\
 0 & \quad {\rm for}\ |x-p_\mu^\lambda|<\ep\ {\rm or}\ |x-p_\mu^\lambda|> \frac 2\ep,
\end{array}
\right.
$$
$|\nabla \eta|\leq \frac 2\ep $ for $\ep<|x-p_\mu^\lambda|<2\ep$ and
$|\nabla \eta|\leq 2\ep $ for $\frac 1\ep<|x-p_\mu^\lambda|<\frac
2\ep$. Moreover, if we define
$\varphi=\varphi_\ep=\eta^2_\ep(v_\mu-v_\mu^\lambda)^+$ and
$\psi=\psi_\ep=\eta_\ep(v_\mu-v_\mu^\lambda)^+$, then a direct
calculation shows that
$$
|\nabla \psi|^2=\nabla(v_\mu-v_\mu^\lambda)\nabla
\varphi+[(v_\mu-v_\mu^\lambda)^+]^2|\nabla \eta|^2.
$$
On the other hand, it is easy to see that equation
\begin{equation}\label{2.8}
-\Delta(v_\mu(x)-v_\mu^\lambda(x))=h(|x-p_\mu|^{N-2}v_\mu(x))v_\mu(x)^{\frac{N+2}{N-2}}-h(|x^\lb-p_\mu|^{N-2}v_\mu(x^\lb))v_\mu(x^\lb)^{\frac{N+2}{N-2}}
\end{equation}
holds in $\Sigma_\lb\setminus \{p_\mu^\lambda\}$. So if we multiply
equation \eqref{2.8} by $\varphi$, then we get
\begin{equation}\label{2.9}
\begin{split}
&\int_{\Sigma_\lambda\cap\{2\varepsilon\leq |\xi-p^\lambda|\leq
\frac 1\ep\}}|\nabla(v_\mu-v_\mu^\lambda)^+|^2\,dx\\&\leq
 \int_{\Sigma_\lambda}|\nabla \psi|^2\,dx\\&=
 \int_{\Sigma_\lambda}\nabla(v_\mu-v_\mu^\lambda)\nabla\varphi\,dx+
 \int_{\Sigma_\lambda}[(v_\mu-v_\mu^\lambda)^+]^2|\nabla\eta_\ep|^2\,dx\\&= \int_{A_\mu^\lambda}-\Delta(v_\mu-v_\mu^\lambda)\varphi\,dx+
 \int_{\partial\Sigma_\lambda^1}\frac{\partial (v_\mu-v_\mu^\lambda)}{\partial \nu}\varphi\,dx'+I_\ep\\&\leq \int_{A_\mu^\lambda}
 [h(|x-p_\mu|^{N-2}v_\mu(x))v_\mu^{\frac{N+2}{N-2}}-h(|x^\lb-p_\mu|^{N-2}v_\mu^\lambda)(v_\mu^\lambda)^{\frac{N+2}{N-2}}]\varphi\,dx+I_\ep,
 \end{split}
\end{equation}
where
$I_\ep=\int_{\Sigma_\lambda}[(v-v^\lambda)^+]^2|\nabla\eta_\ep|^2\,dx$,
and the last inequality follows from that
$\frac{\partial(v-v^\lb)}{\partial \nu}\leq 0$ on $\partial
\Sigma_\lb^1$. Since $h$ is nonincreasing, the above equation
implies
\begin{equation}\label{2.10}
\begin{split}
&\int_{\Sigma_\lambda\cap\{2\varepsilon\leq |\xi-p^\lambda|\leq
\frac 1\ep\}}|\nabla(v_\mu-v_\mu^\lambda)^+|^2\,dx\\&\leq
 \int_{A_\mu^\lambda}
 h(|x-p_\mu|^{N-2}v_\mu(x))[v_\mu^{\frac{N+2}{N-2}}-(v_\mu^\lambda)^{\frac{N+2}{N-2}}]\varphi\,dx+I_\ep\\&\leq
 C\int_{A_\mu^\lambda}
 h(|x-p_\mu|^{N-2}v_\mu(x))v_\mu^{\frac{4}{N-2}}[v_\mu-v_\mu^\lambda]\varphi\,dx+I_\ep.
 \end{split}
\end{equation}
Moreover, since $\Sigma_{\lambda'}\subset \Sigma_\lambda$ for
$\lambda'>\lb$ and $|x-p_\mu|^{N-2}v_\mu(x)$ is bounded in
$\Sigma_\lb$, then there exists a constant $C_\lb$, which is
nonincreasing in $\lb$, such that
\begin{equation}\label{2.11}
\begin{split}
&\int_{\Sigma_\lambda\cap\{2\varepsilon\leq |\xi-p^\lambda|\leq
\frac 1\ep\}}|\nabla(v_\mu-v_\mu^\lambda)^+|^2\,dx\\&\leq
 C_\lb \int_{A_\mu^\lambda}
 v_\mu^{\frac{4}{N-2}}[v_\mu-v_\mu^\lambda]^+\varphi\,dx+I_\ep.
\end{split}
\end{equation}
On the other hand, we deduce from the decay property of $v_\mu$ that
\begin{equation}\label{2.12}
\begin{split}
&\int_{\Sigma_\lambda\cap\{2\varepsilon\leq |\xi-p^\lambda|\leq
\frac 1\ep\}}|\nabla(v_\mu-v_\mu^\lambda)^+|^2\,dx\\&\leq
 C_\lb \int_{A_\mu^\lambda}
  \frac 1{|x-p_\mu|^4}[(v_\mu-v_\mu^\lambda)^+]^2\eta^2\,dx+I_\ep\\&\leq C_\lb (\int_{A_\mu^\lambda}\frac 1{|x-p_\mu|^{2N}}dx)^{\frac 2N}
(\int_{\Sigma_\lambda}[(v_\mu-v_\mu^\lambda)^+]^{\frac{2N}{N-2}}dx)^{\frac{N-2}N}+I_\ep.
\end{split}
\end{equation}
We claim that $I_\ep\to 0$ as $\ep\to 0$. In fact, if we denote
$D_\ep^1=\{x\in \Sigma_\lambda|\ep<|x-p_\mu^\lambda|<2\ep\}$ and
$D_\ep^2=\{x\in \Sigma_\lambda|\frac 1\ep<|x-p_\mu^\lambda|<\frac
2\ep\}$, then we get
$$
\int_{D_\ep^1}|\nabla \eta|^N\,dx\leq C\frac 1{\ep^N}\cdot \ep^N=C.
$$
Similarly, we also have
$$
\int_{D_\ep^2}|\nabla \eta|^N\,dx\leq C\ep^N\cdot \frac 1{\ep^N}=C.
$$
Hence, it follows from the Holder inequality that
$$
I_\ep\leq (\int_{D_\ep^1\cup
D_\ep^2}[(v_\mu-v_\mu^\lambda)^+]^{2^*}\,dx)^{\frac 2{2^*}}\cdot
(\int_{\Sigma}|\nabla \eta|^N\,dx)^{\frac 2N} \to 0
$$
as $\ep \to 0$ since $(v-v^\lambda)^+\in L^{2^*}(\Sigma_\lambda)$.

Finally, letting $\ep\to 0$ in equation \eqref{2.12} and using the
dominated convergence theorem, Sobolev inequality, we get
\begin{equation}\label{2.13}
\int_{\Sigma_\lb}|\nabla(v_\mu-v_\mu^\lambda)^+|^2\,dx \leq
 C_\lb (\int_{A_\mu^\lambda}\frac 1{|x-p_\mu|^{2N}}dx)^{\frac 2N}
\int_{\Sigma_\lambda}|\nabla(v_\mu-v_\mu^\lambda)^+|^2\,dx,
\end{equation}
which completes the proof of this lemma.
\end{proof}

Before we continue the proof of Theorem \ref{t 1.1}, we give some
comments on Lemma \ref{t 2.1}. Since we don't require the nonlinear
terms $f$ and $g$ are Lipschitz continuous, the usual maximum
principle does not work. Thanks to equation \eqref{2.7}, it can play
the same role as the maximum principle. In fact, if we can prove
$$C_\lb (\int_{A_\mu^\lambda}\frac 1{|x-p_\mu|^{2N}}dx)^{\frac 2N}<1,$$
 then we get
$v_\mu\leq v_\mu^\lambda$, the same conclusion as the maximum
principle implies.

The next lemma shows that we can start the moving plane from some
place.
\begin{lemma}\label{t 2.2}
Under assumptions of Theorem \ref{t 1.1}, there exists
$\lambda_0>\mu$, such that for all $\lambda\geq \lambda_0$, we have
$v_\mu\leq v_\mu^\lambda$ in $\Sigma_\lambda$.
\end{lemma}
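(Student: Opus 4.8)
The plan is to use the integral estimate \eqref{2.7} from Lemma \ref{t 2.1} and show that its coefficient becomes strictly less than $1$ when $\lambda$ is large enough, which forces $(v_\mu-v_\mu^\lambda)^+\equiv 0$ in $\Sigma_\lambda$. The key point is that the factor $C_\lambda$ is nonincreasing in $\lambda$, so it is bounded above by $C_{\lambda_1}$ for any fixed $\lambda_1>\mu$; it therefore suffices to make the geometric factor $\left(\int_{A_\mu^\lambda}|x-p_\mu|^{-2N}\,dx\right)^{2/N}$ small.

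First I would observe that $A_\mu^\lambda\subset\Sigma_\lambda=\{x\in\R:x_1>\lambda\}$, and since $\lambda>\mu\geq 0$ we have $|x-p_\mu|\geq |x_1-\mu|\geq\lambda-\mu$ on $\Sigma_\lambda$; more usefully, for $\lambda$ large the whole region $\Sigma_\lambda$ sits far from $p_\mu$. Then I would estimate
\[
\int_{A_\mu^\lambda}\frac{1}{|x-p_\mu|^{2N}}\,dx\leq\int_{\Sigma_\lambda}\frac{1}{|x-p_\mu|^{2N}}\,dx\leq\int_{\{x\in\R:\,|x-p_\mu|>\lambda-\mu\}}\frac{1}{|x-p_\mu|^{2N}}\,dx,
\]
and this last integral is finite (since $2N>N$) and tends to $0$ as $\lambda\to+\infty$; explicitly it is bounded by $C(\lambda-\mu)^{-N}$ by passing to polar coordinates centered at $p_\mu$. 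Consequently, fixing any $\lambda_1>\mu$ and using $C_\lambda\leq C_{\lambda_1}$ for $\lambda\geq\lambda_1$, we can choose $\lambda_0\geq\lambda_1$ so large that
\[
C_{\lambda_1}\left(\int_{A_\mu^\lambda}\frac{1}{|x-p_\mu|^{2N}}\,dx\right)^{2/N}\leq\frac12\qquad\text{for all }\lambda\geq\lambda_0.
\]
Plugging this into \eqref{2.7} gives $\int_{\Sigma_\lambda}|\nabla(v_\mu-v_\mu^\lambda)^+|^2\,dx\leq\frac12\int_{\Sigma_\lambda}|\nabla(v_\mu-v_\mu^\lambda)^+|^2\,dx$, and since this quantity is finite (Lemma \ref{t 2.1}), it must vanish. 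Hence $(v_\mu-v_\mu^\lambda)^+$ is constant on each connected component of $\Sigma_\lambda$; because it lies in $L^{2^*}(\Sigma_\lambda)$ and $\Sigma_\lambda$ has infinite measure, the constant is $0$, so $v_\mu\leq v_\mu^\lambda$ in $\Sigma_\lambda$ for all $\lambda\geq\lambda_0$.

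I expect the only real subtlety to be bookkeeping: making sure the constant $C_\lambda$ is genuinely controlled uniformly for $\lambda\geq\lambda_0$ (which is exactly the monotonicity built into Lemma \ref{t 2.1}), and checking that for $\lambda$ large the reflected singularity $p_\mu^\lambda$ does not interfere — but $p_\mu^\lambda=(2\lambda-\mu,0,\dots,0)$ lies in $\overline{\Sigma_\lambda}$ only on the hyperplane $x_1=\lambda$ boundary region far from where the estimate is applied, and in any case \eqref{2.7} already incorporates the cutoff around $p_\mu^\lambda$, so no extra work is needed. The decay rate $|x-p_\mu|^{2-N}$ of the Kelvin transform, recorded in \eqref{2.2}, is what guarantees the finiteness of all integrals involved; everything else is the standard "start the moving plane from infinity" argument.
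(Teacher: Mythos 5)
Your proposal is correct and takes essentially the same route as the paper: bound $C_\lambda$ using its monotonicity, observe that $\int_{\Sigma_\lambda}|x-p_\mu|^{-2N}\,dx\to 0$ as $\lambda\to+\infty$, and then conclude from \eqref{2.7} that the (finite) Dirichlet integral of $(v_\mu-v_\mu^\lambda)^+$ vanishes. The paper's proof is simply a condensed version of this argument, so your extra bookkeeping (polar-coordinate decay rate, passing from the vanishing gradient to $(v_\mu-v_\mu^\lambda)^+\equiv 0$ via the $L^{2^*}$ bound) just fills in details the author leaves implicit.
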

\begin{proof}
The conclusion of this lemma is a direct corollary of Lemma \ref{t
2.1}. In fact, by the decay speed of $v_\mu$, we can choose
$\lambda_0$ large enough such that
$$C_\lb (\int_{\Sigma_\lambda}\frac 1{|x-p_\mu|^{2N}}dx)^{\frac 2N}<\frac 12$$ for all $\lambda\geq \lambda_0$, then equation (\ref{2.7})
implies that
$$\int_{\Sigma_\lambda}|\nabla(v_\mu-v_\mu^\lambda)^+|^2\,dx=0.$$
The assertion follows.
\end{proof}
Now we can move the plane from the right to the left such that
$v_\mu\leq v_\mu^\lb$ in $\Sigma_\lb$ and suppose this process stops
at some $\lb_1$. More precisely, we define
\begin{equation}\label{2.14}
\lambda_1=\inf\{\lambda|v_\mu\leq v_\mu^\lambda\ {\rm in}\
\Sigma_\lambda \},
\end{equation}
then we have the following
\begin{lemma}\label{t 2.3}
If $u\not\equiv 0$, then $\lambda_1\leq \mu$.
\end{lemma}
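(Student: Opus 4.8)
The plan is to argue by contradiction, following the standard moving-plane scheme. Suppose $\lambda_1 > \mu$. The goal is to produce a contradiction by showing that the plane can be moved slightly past $\lambda_1$, i.e. that $v_\mu \le v_\mu^\lambda$ still holds for some $\lambda < \lambda_1$, contradicting the definition of $\lambda_1$ as an infimum.

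First I would record the limiting inequality $v_\mu \le v_\mu^{\lambda_1}$ in $\Sigma_{\lambda_1}$, which holds by continuity of $v_\mu$ in $\lambda$ and the fact that the inequality holds for all $\lambda > \lambda_1$. Next, I would argue that the inequality is in fact strict in the interior: $v_\mu < v_\mu^{\lambda_1}$ in $\Sigma_{\lambda_1}$, unless $v_\mu \equiv v_\mu^{\lambda_1}$ there. The point where this last alternative is ruled out is where $u \not\equiv 0$ is used: if $v_\mu \equiv v_\mu^{\lambda_1}$, then $u$ (equivalently $v_\mu$) would be symmetric about the hyperplane $T_{\lambda_1}$ with $\lambda_1 > \mu \ge 0$; but then pulling this back through the Kelvin transform and using that $u$ satisfies a Neumann condition on all of $\partial \mathbb R^N_+$ away from $\Gamma_1$, one checks the reflected data are incompatible with the equations \eqref{2.5}--\eqref{2.6} unless $u$ is constant $=0$ — more precisely, the singularity of $v_\mu$ at $p_\mu$ would have to be matched by a singularity of $v_\mu^{\lambda_1}$ at $p_\mu^{\lambda_1}$, which lies in $\Sigma_{\lambda_1}$ and is a regular point, forcing $v_\mu \equiv 0$ and hence $u \equiv 0$.

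Then I would run the usual "narrow region" continuation. Since $\lambda \mapsto C_\lambda$ is nonincreasing and $\lambda \mapsto \int_{\Sigma_\lambda} |x-p_\mu|^{-2N}\,dx$ is continuous and nonincreasing, Lemma \ref{t 2.1} gives, for $\lambda$ slightly below $\lambda_1$,
$$
\int_{\Sigma_\lambda} |\nabla(v_\mu - v_\mu^\lambda)^+|^2\,dx \le C_\lambda \Big(\int_{A_\mu^\lambda} \frac{1}{|x-p_\mu|^{2N}}\,dx\Big)^{2/N} \int_{\Sigma_\lambda} |\nabla(v_\mu - v_\mu^\lambda)^+|^2\,dx.
$$
Because $v_\mu < v_\mu^{\lambda_1}$ in $\Sigma_{\lambda_1}$ and $v_\mu$ decays at infinity, the set $A_\mu^{\lambda_1}$ has measure zero, so $\int_{A_\mu^{\lambda_1}} |x-p_\mu|^{-2N}\,dx = 0$; by absolute continuity of the integral and the fact that $A_\mu^\lambda$ varies "continuously" in $\lambda$ (controlled using that $v_\mu - v_\mu^{\lambda_1}$ is bounded away from $0$ on compact subsets, together with a uniform tail estimate near infinity and near $p_\mu^{\lambda_1}$), one gets $C_\lambda \big(\int_{A_\mu^\lambda} |x-p_\mu|^{-2N}\,dx\big)^{2/N} < 1$ for $\lambda \in (\lambda_1 - \delta, \lambda_1)$ with $\delta$ small. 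This forces $\int_{\Sigma_\lambda} |\nabla(v_\mu - v_\mu^\lambda)^+|^2\,dx = 0$, hence $(v_\mu - v_\mu^\lambda)^+ \equiv 0$, i.e. $v_\mu \le v_\mu^\lambda$ in $\Sigma_\lambda$ for such $\lambda$ — contradicting the minimality of $\lambda_1$.

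The main obstacle is the step controlling the measure of $A_\mu^\lambda$ as $\lambda \downarrow \lambda_1$: one must show that the "bad set" does not suddenly gain mass, which requires splitting $\Sigma_\lambda$ into a compact part (where strict inequality $v_\mu < v_\mu^{\lambda_1}$ plus continuity gives $v_\mu \le v_\mu^\lambda$ for $\lambda$ close to $\lambda_1$, so $A_\mu^\lambda$ is empty there) and tail parts near infinity and near the reflected pole $p_\mu^{\lambda_1}$, where one uses the explicit decay rate $|x-p_\mu|^{2-N}$ of $v_\mu$ from \eqref{2.2} to make the integral $\int |x-p_\mu|^{-2N}\,dx$ over those tails arbitrarily small uniformly in $\lambda$. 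I would also need to be careful that the boundary term $\frac{\partial(v_\mu - v_\mu^\lambda)}{\partial\nu} \le 0$ on $\partial\Sigma_\lambda^1$ — used already in Lemma \ref{t 2.1} — persists for $\lambda$ near $\lambda_1$; this follows from monotonicity of $k$ and the inequality $v_\mu \le v_\mu^\lambda$ on the relevant boundary piece, exactly as in the interior estimate.
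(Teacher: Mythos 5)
Your narrow-region continuation is essentially the paper's own argument: continuity gives $v_\mu\le v_\mu^{\lambda_1}$, the strong maximum principle gives strictness in $\Sigma_{\lambda_1}$ unless $v_\mu\equiv v_\mu^{\lambda_1}$, and then the smallness of $\int_{A_\mu^\lambda}|x-p_\mu|^{-2N}\,dx$ for $\lambda$ slightly below $\lambda_1$ (the paper gets this directly from dominated convergence) combined with Lemma \ref{t 2.1} contradicts the definition of $\lambda_1$. The genuine gap is in how you dispose of the symmetric case $v_\mu\equiv v_\mu^{\lambda_1}$. Your singularity-matching argument shows at most that symmetry about $T_{\lambda_1}$ would force $v_\mu$ to be bounded near $p_\mu$, i.e.\ $u(x)=O(|x|^{2-N})$ at infinity (``$u$ regular at infinity''). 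That does not force $v_\mu\equiv 0$: a decaying positive solution is exactly one of the scenarios the theorem still has to exclude, and in the paper it is excluded only much later (Case 1 of the proof of Theorem \ref{t 1.1}) by means of the monotonicity of $u$ in the $x_1$ direction (Proposition \ref{t 2.8}), which is itself built on the present lemma — so invoking that conclusion here would be circular. Worse, if $u$ does decay at that rate there is no singularity at $p_\mu$ at all, the ``matching'' produces no contradiction, and your argument yields nothing precisely in the case it is meant to handle.

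The paper's mechanism for this step is different and uses the mixed boundary structure: if $v_\mu\equiv v_\mu^{\lambda_1}$ with $\lambda_1>\mu\ge 0$, then, since reflection in $x_1$ preserves the normal derivative in $x_N$, the zero-Neumann condition on the portion of $\partial\Sigma_{\lambda_1}^1$ with $x_1>\lambda_1$ is transported onto a piece of the boundary where the nonlinear condition in \eqref{2.5} holds, forcing the boundary nonlinearity (and, in the paper's reading, $v_\mu$ at some boundary point) to vanish; since $v_\mu\ge 0$ is superharmonic, Hopf's lemma then yields $v_\mu\equiv 0$, i.e.\ $u\equiv 0$, which is exactly where the hypothesis $u\not\equiv 0$ enters. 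You need an argument of this kind (or some other non-circular one) to close the symmetric case; as written that step fails. A minor further point: your closing concern about the sign of $\partial(v_\mu-v_\mu^\lambda)/\partial\nu$ on $\partial\Sigma_\lambda^1$ has nothing to do with monotonicity of $k$ — for $\lambda>\mu\ge 0$ it is pure sign bookkeeping ($\partial v_\mu/\partial\nu=0$ there while $\partial v_\mu^\lambda/\partial\nu\ge 0$), and it is already built into Lemma \ref{t 2.1}, so there is nothing that needs to ``persist'' as $\lambda$ varies.
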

\begin{proof}
We prove the conclusion by contradiction. Suppose on the contrary
that $\lb_1>\mu$, then we claim that $v_\mu\equiv
v_\mu^{\lambda_1}$.

We prove the claim by contradiction. Suppose $v_\mu\not\equiv
v_\mu^{\lambda_1}$, then we show that the plane can be moved to the
left a little. That is, we will show that there exists $\delta>0$,
such that $v_\mu(x)\leq v_\mu^\lambda(x)$ in $\Sigma_\lambda$ for
all $\lambda\in [\lambda_1-\delta,\lambda_1]$. This contradicts the
choice of $\lambda_1$.

To prove this claim, we first infer from the continuity that
$v_\mu(x)\leq v_\mu^{\lambda_1} (x)$. Moreover, we have
\begin{eqnarray*}
h(|x-p_\mu|^{N-2}v_\mu)v_\mu^{\frac{N+2}{N-2}}&=&\frac{f(|x-p_\mu|^{N-2}v_\mu)}{|x-p_\mu|^{N+2}}
\\&\leq&\frac{f(|x-p_\mu|^{N-2}v_\mu^{\lambda_1})}{|x-p_\mu|^{N+2}}
\\&=&\frac{f(|x-p_\mu|^{N-2}v_\mu^{\lambda_1})}{[|x-p_\mu|^{N-2}
v_\mu^{\lambda_1}]^{\frac{N+2}{N-2}}}(v_\mu^{\lambda_1})^{\frac{N+2}{N-2}}
\\&\leq&\frac{f(|x^{\lambda_1}-p_\mu|^{N-2}v_\mu^{\lambda_1})}{[|x^{\lambda_1}-p_\mu|^{N-2}
v_\mu^{\lambda_1}]^{\frac{N+2}{N-2}}}(v_\mu^{\lambda_1})^{\frac{N+2}{N-2}}
\\&=&h(|x^{\lambda_1}-p_\mu|^{N-2}v_\mu^{\lambda_1})(v_\mu^{\lambda_1})^{\frac{N+2}{N-2}},
\end{eqnarray*}
where the first inequality holds since $f$ is nondecreasing, the
second inequality is a consequence of (ii) in Theorem \ref{t 1.1}.

 This equation implies
$$
-\Delta v_\mu\leq -\Delta v_\mu^{\lambda_1},
$$
then we infer from the maximum principle that
$v_\mu<v_\mu^{\lambda_1}$ in $\Sigma_{\lambda_1}$ since
$v_\mu\not\equiv v_\mu^{\lambda_1}$. Moreover, since $\frac
1{|x-p_\mu|^{2N}}\chi_{ A_\mu^\lambda}\to 0$ a.e. as
$\lambda\to\lambda_1$ and $\frac
1{|x-p_\mu|^{2N}}\chi_{A_\mu^\lambda}\leq \frac
1{|x-p_\mu|^{2N}}\chi_{A_\mu^{\lambda_1-\delta}}$ for $\lambda\in
[\lambda_1-\delta, \lambda_1]$ and some $\delta>0$, then the
dominated convergence theorem implies
$$
\int_{A_\mu^{\lambda}}\frac 1{|x-p_\mu|^{2N}}\,dx\to 0
$$
as $\lambda\to \lambda_1$. That is, there exists $\delta>0$, such
that
$$
C_\lb (\int_{A_\mu^\lambda}\frac 1{|x-p_\mu|^{2N}}dx)^{\frac
2N}<\frac 12
$$
for all $\lambda\in[\lambda_1-\delta,\lambda_1]$. Then we infer from
Lemma \ref{t 2.1} that $v_\mu\leq v_\mu^\lambda$ for all
$\lambda\in[\lambda_1-\delta,\lambda_1]$. This contradicts the
definition of $\lambda_1$. So we prove the claim.

Next, we show that $v_\mu\equiv v_\mu^{\lb_1}$ implies $v_\mu\equiv
0$ and hence $u\equiv 0$. In fact, if $v_\mu\equiv
v_\mu^{\lambda_1}$, then we deduce from the Neumann boundary value
condition and the nonlinear boundary value condition that
$v_\mu(x)=0$ for some $x\in \BR$, this contradicts the Hopf Theorem
unless $v_\mu\equiv 0$.
\end{proof}

The above two lemmas implies the following
\begin{corollary}\label{t 2.22}
For any $\mu \geq 0$, we have $v_\mu(x)\leq v_\mu(x^\mu)$ for $x\in \Sigma_{\mu}$.
\end{corollary}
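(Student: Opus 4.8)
The plan is to deduce this corollary from the moving plane scheme already carried out in Lemmas \ref{t 2.2} and \ref{t 2.3}, which through equation \eqref{2.5} is valid for $v_\mu$ for every $\mu\geq 0$. First I would dispose of the trivial case $u\equiv 0$: then $v_\mu\equiv 0$ by \eqref{2.1} and the claimed inequality holds with equality, so from now on I assume $u\not\equiv 0$. By Lemma \ref{t 2.2} the moving plane procedure can be started, so that the number $\lambda_1$ defined in \eqref{2.14} is finite, and by Lemma \ref{t 2.3} we have $\lambda_1\leq\mu$.

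The remaining point is to know that $v_\mu\leq v_\mu^\lambda$ in $\Sigma_\lambda$ holds for \emph{every} $\lambda\geq\lambda_1$; granting this and using $\mu\geq\lambda_1$, the choice $\lambda=\mu$ gives $v_\mu\leq v_\mu^\mu$ in $\Sigma_\mu$, i.e. $v_\mu(x)\leq v_\mu(x^\mu)$ for $x\in\Sigma_\mu$, which is the assertion. I would establish this exactly as in the proof of Lemma \ref{t 2.3}, run at an arbitrary $\lambda^\ast>\lambda_1$ at which $v_\mu\leq v_\mu^{\lambda^\ast}$ in $\Sigma_{\lambda^\ast}$: since $u\not\equiv 0$, the equality $v_\mu\equiv v_\mu^{\lambda^\ast}$ is impossible (it would force $v_\mu$ to vanish at a point of $\partial\R$ through the Neumann and nonlinear boundary conditions, whence $v_\mu\equiv 0$ by the Hopf lemma, contradicting $u\not\equiv 0$), so the strong maximum principle yields $v_\mu<v_\mu^{\lambda^\ast}$ in $\Sigma_{\lambda^\ast}$; then $|x-p_\mu|^{-2N}\chi_{A_\mu^\lambda}\to 0$ a.e. as $\lambda\to\lambda^\ast$, so by dominated convergence $C_\lambda\big(\int_{A_\mu^\lambda}|x-p_\mu|^{-2N}\,dx\big)^{2/N}<\tfrac12$ for $\lambda$ near $\lambda^\ast$, and \eqref{2.7} then forces $v_\mu\leq v_\mu^\lambda$ in $\Sigma_\lambda$ for those $\lambda$. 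Combined with the fact that $v_\mu\leq v_\mu^\lambda$ for all large $\lambda$ (Lemma \ref{t 2.2}) and the continuity of $v_\mu$ at the endpoint, this propagates the inequality over the whole range $\lambda\geq\lambda_1$.

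I expect this no-gap step to be the only genuine obstacle, and here it is entirely routine: every ingredient it uses — the integral estimate \eqref{2.7}, the decay of $v_\mu$ controlling $\int_{A_\mu^\lambda}|x-p_\mu|^{-2N}\,dx$, the strong maximum principle, and the Hopf-lemma alternative — has already been set up in Lemmas \ref{t 2.1}--\ref{t 2.3}, so no new estimate is needed. The corollary is in effect just the bookkeeping statement that the moving plane reaches past the hyperplane $x_1=\mu$.
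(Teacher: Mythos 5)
Your proposal is correct and follows essentially the paper's own route: the paper records this corollary as an immediate consequence of Lemmas \ref{t 2.2} and \ref{t 2.3}, and what you add is just the standard bookkeeping left implicit in the definition \eqref{2.14} (the set of admissible $\lambda$ is nonempty for large $\lambda$, closed by continuity, and open to the left by the estimate \eqref{2.7} together with the strong maximum principle and dominated convergence, exactly as in the proof of Lemma \ref{t 2.3}). The only blemish is your assertion that the inequality propagates over the whole range $\lambda\ge\lambda_1$: for $\lambda\in(\lambda_1,\mu)$ the inequality \eqref{2.7} and the $L^{2^*}\cap L^\infty(\Sigma_\lambda)$ information on $v_\mu$ are not available (the singular point $p_\mu$ lies on $\overline{\Sigma_\lambda}$), but this is harmless for the corollary, since only $\lambda=\mu$ is needed and you reach it by propagating over $\lambda>\mu$ and passing to the endpoint by continuity, as you indicate.
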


Next, we consider the case $\mu<0$. In this case, it seems difficult to start the moving plane method as $\mu>0$ because of the boundary
value condition of $v_\mu$. Instead of choosing $\mu$ arbitrary for $\mu\geq 0$, we move $\mu$ from $0$ to $-\infty$ gradually. We first need some integral
inequality similar to equation \eqref{2.7} to substitute the maximum
principle. In this case, we need to evaluate the term
$\int_{\partial \Sigma_\lb^1}\frac{\partial(v-v^\lb)}{\partial
\nu}\varphi\,dx'$ in equation \eqref{2.9} more carefully since it is
not nonpositive any longer. Similar to Lemma \ref{t 2.1}, we have
the following result.
\begin{lemma}\label{t 2.4}
Suppose that $\mu<0$, then for any fixed $\mu<\lambda\leq \mu-\frac
1{2\mu}$, the functions $v_\mu$ and $(v_\mu-v_\mu^\lambda)^+$ belong
to $ L^{2^*}(\Sigma_\lambda)\cap L^\infty(\Sigma_\lambda)$ with
$2^*=\frac{2N}{N-2}$. Further more, if we denote
$A_\mu^\lambda=\{x\in \Sigma_\lambda| v_\mu>v_\mu^\lambda\}$ and
$B_\mu^\lambda=\{x\in
\partial \Sigma_\lambda^1|v_\mu(x)>v_\mu^\lambda(x)\}$, then there exists
$C_\lambda>0$, which is nonincreasing in $\lambda$, such that
\begin{equation}\label{2.15}
\begin{split}
&\int_{\Sigma_\lambda}|\nabla (v_\mu-v_\mu^\lambda)^+|^2\,dx\\&\leq
C_\lambda[(\int_{A_\mu^\lambda}\frac
1{|x-p_\mu|^{2N}}\,dx)^{\frac2N}+(\int_{B_\mu^\lambda}\frac
1{|x-p_\mu|^{2(N-1)}}\,dx')^{\frac
1{N-1}}]\cdot(\int_{\Sigma_\lambda}|\nabla(v_\mu-v_\mu^\lambda)^+|^2\,dx).
\end{split}
\end{equation}
\end{lemma}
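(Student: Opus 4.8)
The plan is to mimic the proof of Lemma \ref{t 2.1}, but with the extra boundary term $\int_{\partial\Sigma_\lambda^1}\frac{\partial(v_\mu-v_\mu^\lambda)}{\partial\nu}\varphi\,dx'$ now retained and estimated, rather than discarded for having a sign. First I would record the integrability claim: since $\mu<\lambda$ and $\mu<0$, the set $\Sigma_\lambda$ lies at positive distance from $p_\mu$, so the decay $v_\mu(x)\sim|x-p_\mu|^{2-N}$ gives $v_\mu,(v_\mu-v_\mu^\lambda)^+\in L^{2^*}(\Sigma_\lambda)\cap L^\infty(\Sigma_\lambda)$ exactly as before. Then I would introduce the same cutoff $\eta=\eta_\ep$ centered at $p_\mu^\lambda$, set $\varphi=\eta^2(v_\mu-v_\mu^\lambda)^+$ and $\psi=\eta(v_\mu-v_\mu^\lambda)^+$, and multiply the difference equation \eqref{2.8} (which holds on $\Sigma_\lambda\setminus\{p_\mu^\lambda\}$) by $\varphi$. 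Integration by parts produces the interior term, the boundary term over $\partial\Sigma_\lambda^1$, and the error term $I_\ep$; the interior term is handled exactly as in \eqref{2.10}--\eqref{2.13} using that $h$ is nonincreasing, that $|x-p_\mu|^{N-2}v_\mu$ is bounded, the decay of $v_\mu$, and Sobolev, yielding the first term $C_\lambda\big(\int_{A_\mu^\lambda}|x-p_\mu|^{-2N}\,dx\big)^{2/N}$ times $\int_{\Sigma_\lambda}|\nabla(v_\mu-v_\mu^\lambda)^+|^2$. Also $I_\ep\to0$ as $\ep\to0$ by the same Hölder computation on the two dyadic annuli.

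The new ingredient is the boundary term. On $\partial\Sigma_\lambda^1$ we are, by the constraint $\mu<\lambda\le\mu-\frac1{2\mu}$, on the portion of the boundary where the Neumann-type data for $v_\mu$ is $k(|x-p_\mu|^{N-2}v_\mu)v_\mu^{N/(N-2)}$ and for its reflection $v_\mu^\lambda$ is $k(|x^\lambda-p_\mu|^{N-2}v_\mu^\lambda)(v_\mu^\lambda)^{N/(N-2)}$; I would check this using the description of the Neumann versus nonlinear pieces in \eqref{2.6}, so that the mean value / monotonicity argument used for $h$ in Lemma \ref{t 2.3} applies verbatim to $k$. Concretely, on $B_\mu^\lambda$ (where $v_\mu>v_\mu^\lambda$), using that $g$ is nondecreasing, that $k$ is nonincreasing, and that $|x-p_\mu|\le|x^\lambda-p_\mu|$ there,
$$
\frac{\partial(v_\mu-v_\mu^\lambda)}{\partial\nu}
\le k(|x-p_\mu|^{N-2}v_\mu)\big[v_\mu^{N/(N-2)}-(v_\mu^\lambda)^{N/(N-2)}\big]
\le C\,k(|x-p_\mu|^{N-2}v_\mu)\,v_\mu^{2/(N-2)}(v_\mu-v_\mu^\lambda)^+,
$$
while off $B_\mu^\lambda$ the integrand $\frac{\partial(v_\mu-v_\mu^\lambda)}{\partial\nu}\varphi$ is $\le0$ and can be dropped. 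Using the boundedness of $|x-p_\mu|^{N-2}v_\mu$ to bound $k(\cdot)$ by a constant $C_\lambda$ nonincreasing in $\lambda$, and the decay $v_\mu^{2/(N-2)}\le C|x-p_\mu|^{-2}$ on $\Sigma_\lambda$, the boundary term is dominated by
$$
C_\lambda\int_{B_\mu^\lambda}\frac{1}{|x-p_\mu|^{2}}\,\big[(v_\mu-v_\mu^\lambda)^+\big]^2\eta^2\,dx'.
$$
Then a Hölder estimate on the $(N-1)$-dimensional set $B_\mu^\lambda$ with exponents $N-1$ and $\frac{N-1}{N-2}$, followed by the trace Sobolev inequality $\|\psi\|_{L^{2(N-1)/(N-2)}(\partial\Sigma_\lambda^1)}\le C\|\nabla\psi\|_{L^2(\Sigma_\lambda)}$, converts this into $C_\lambda\big(\int_{B_\mu^\lambda}|x-p_\mu|^{-2(N-1)}\,dx'\big)^{1/(N-1)}\int_{\Sigma_\lambda}|\nabla(v_\mu-v_\mu^\lambda)^+|^2$. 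Adding the interior and boundary contributions, letting $\ep\to0$ with dominated convergence, gives \eqref{2.15}.

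The main obstacle I expect is precisely the geometric bookkeeping on the boundary: verifying that under the hypothesis $\mu<\lambda\le\mu-\frac1{2\mu}$ the reflected point $x^\lambda$, for $x\in\partial\Sigma_\lambda^1$, actually lands where the nonlinear (rather than the trivial) Neumann condition holds for $v_\mu$, so that the comparison $k(|x-p_\mu|^{N-2}v_\mu)$ versus $k(|x^\lambda-p_\mu|^{N-2}v_\mu^\lambda)$ is legitimate and the sign $\frac{\partial(v_\mu-v_\mu^\lambda)}{\partial\nu}\le\text{(good term)}$ really goes the right way; this is where the specific constraint on the range of $\lambda$ in the statement is used. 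The second, more technical point is making sure the trace Sobolev inequality is applied to $\psi=\eta(v_\mu-v_\mu^\lambda)^+$ with the cutoff still in place, so that the annular error terms at $\ep$ and $1/\ep$ do not contaminate the boundary estimate — this is routine but must be done before passing to the limit. Everything else is a transcription of the computation in Lemma \ref{t 2.1}.
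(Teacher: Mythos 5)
Your plan follows the paper's proof of Lemma \ref{t 2.4} essentially verbatim: same cutoff and test functions, same treatment of the interior term as in Lemma \ref{t 2.1}, and the boundary term over $\partial\Sigma_\lambda^1$ retained, compared via the monotonicity of $k$, estimated with H\"older (exponents $N-1$ and $\tfrac{N-1}{N-2}$) and the trace Sobolev inequality, then $\ep\to0$. Two details should be corrected, though neither changes the structure. First, your auxiliary geometric claim is reversed: for $x\in\Sigma_\lambda$ with $\lambda>\mu$ one has $|x^\lambda-p_\mu|\le |x-p_\mu|$ (the reflection moves the point toward $p_\mu$), not the other way around. It is this inequality, combined with $v_\mu>v_\mu^\lambda$ on $B_\mu^\lambda$, that gives $|x^\lambda-p_\mu|^{N-2}v_\mu^\lambda\le |x-p_\mu|^{N-2}v_\mu$ and hence, $k$ being nonincreasing, $k(|x^\lambda-p_\mu|^{N-2}v_\mu^\lambda)\ge k(|x-p_\mu|^{N-2}v_\mu)$, which is exactly what your first displayed bound requires; with the inequality as you wrote it, the monotonicity of $k$ points the wrong way and the step would not follow (monotonicity of $g$ is not needed here at all).

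Second, your description of the boundary bookkeeping is slightly off: it is not true that on all of $\partial\Sigma_\lambda^1$ the data for $v_\mu$ is the nonlinear one; by \eqref{2.6} the strip $\lambda<x_1<\mu-\frac1\mu$ carries the homogeneous Neumann condition. The correct case analysis is: if $x$ carries the zero condition, then $\frac{\partial(v_\mu-v_\mu^\lambda)}{\partial\nu}$ is either $0$ or $-k(\cdot)(v_\mu^\lambda)^{\frac N{N-2}}\le 0$, so such points are harmless; the only dangerous configuration is $x$ in the nonlinear region ($x_1>\mu-\frac1\mu$) with $x^\lambda$ in the zero-Neumann region ($\mu<2\lambda-x_1<\mu-\frac1\mu$), where an uncontrolled positive term $k(\cdot)v_\mu^{\frac N{N-2}}$ would appear. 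That configuration requires $\mu-\frac1\mu<x_1<2\lambda-\mu$, which is nonempty precisely when $\lambda>\mu-\frac1{2\mu}$; the hypothesis $\lambda\le\mu-\frac1{2\mu}$ rules it out, and this is the exact role of the restriction on $\lambda$ that you correctly anticipated but did not verify. With these two points fixed, your argument coincides with the paper's.
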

\begin{proof}
The proof of this lemma is similar to the proof of Lemma \ref{t
2.1}. We sketch it. We denote $\eta, \varphi,\psi$ as the proof of
Lemma \ref{t 2.1}, then we have the following inequality similar to
equation \eqref{2.9}. It reads
\begin{equation}\label{2.16}
\begin{split}
&\int_{\Sigma_\lambda\cap\{2\varepsilon\leq |\xi-p^\lambda|\leq
\frac 1\ep\}}|\nabla(v_\mu-v_\mu^\lambda)^+|^2\,dx\\&\leq
\int_{A_\mu^\lambda}-\Delta(v_\mu-v_\mu^\lambda)\varphi\,dx+
 \int_{\partial\Sigma_\lambda^1}\frac{\partial (v_\mu-v_\mu^\lambda)}{\partial
 \nu}\varphi\,dx'+I_\ep\\&\leq \int_{A_\mu^\lambda}
 [h(|x-p_\mu|^{N-2}v_\mu(x))v_\mu^{\frac{N+2}{N-2}}-h(|x^\lb-p_\mu|^{N-2}v_\mu^\lambda)(v_\mu^\lambda)^{\frac{N+2}{N-2}}]\varphi\,dx
 \\&+\int_{B_\mu^\lb}[k(|x-p_\mu|^{N-2}v_\mu(x))v_\mu^{\frac{N}{N-2}}-k(|x^\lb-p_\mu|^{N-2}v_\mu^\lambda)(v_\mu^\lambda)^{\frac{N}{N-2}}]\varphi\,dx'+I_\ep,
 \end{split}
\end{equation}
where
$I_\ep=\int_{\Sigma_\lambda}[(v_\mu-v_\mu^\lambda)^+]^2|\nabla\eta_\ep|^2\,dx$.
Since $h$ and $k$ are nonincreasing, the above equation implies
\begin{equation}\label{2.17}
\begin{split}
&\int_{\Sigma_\lambda\cap\{2\varepsilon\leq |\xi-p^\lambda|\leq
\frac 1\ep\}}|\nabla(v_\mu-v_\mu^\lambda)^+|^2\,dx\\&\leq
 \int_{A_\mu^\lambda}
 h(|x-p_\mu|^{N-2}v_\mu(x))[v_\mu^{\frac{N+2}{N-2}}-(v_\mu^\lambda)^{\frac{N+2}{N-2}}]\varphi\,dx\\&
 +\int_{B_\mu^\lb}k(|x-p_\mu|^{N-2}v_\mu(x))[v_\mu(x)^{\frac{N}{N-2}}-(v_\mu^\lambda)^{\frac{N}{N-2}}]\varphi\,dx'+I_\ep\\&\leq
 C\int_{A_\mu^\lambda}
 h(|x-p_\mu|^{N-2}v_\mu(x))v_\mu^{\frac{4}{N-2}}[v_\mu-v_\mu^\lambda]\varphi\,dx\\&
 +C\int_{B_\mu^\lb}k(|x-p_\mu|^{N-2}v_\mu(x))v_\mu(x)^{\frac{2}{N-2}}[v_\mu(x)-v_\mu(x^\lambda)]\varphi\,dx'+I_\ep.
 \\&\leq
 C_\lb \int_{A_\mu^\lambda}
 v_\mu^{\frac{4}{N-2}}[v_\mu-v_\mu^\lambda]^+\varphi\,dx
 +C_\lb\int_{B_\mu^\lb}v_\mu(x)^{\frac{2}{N-2}}[v_\mu(x)-v_\mu(x^\lambda)]\varphi\,dx'+I_\ep
 \\&\leq C_\lb (\int_{A_\mu^\lambda}\frac 1{|x-p_\mu|^{2N}}dx)^{\frac 2N}(\int_{\Sigma_\lambda}[(v_\mu-v_\mu^\lambda)^+]^{\frac{2N}{N-2}}dx)^{\frac{N-2}N}
 \\&+C_\lb(\int_{B_\mu^\lambda}\frac
1{|x-p_\mu|^{2(N-1)}}\,dx')^{\frac 1{N-1}}
(\int_{\partial\Sigma_\lambda^1}[(v_\mu-v_\mu^\lambda)^+]^{\frac{2(N-1)}{N-2}}\,dx')^{\frac{N-2}{N-1}}+I_\ep
\\&\leq
C_\lambda[(\int_{A_\mu^\lambda}\frac
1{|x-p_\mu|^{2N}}\,dx)^{\frac2N}+(\int_{B_\mu^\lambda}\frac
1{|x-p_\mu|^{2(N-1)}}\,dx')^{\frac
1{N-1}}]\cdot(\int_{\Sigma_\lambda}|\nabla(v_\mu-v_\mu^\lambda)^+|^2\,dx)+I_\ep.
 \end{split}
\end{equation}
Letting $\ep\to 0$ in the above equation and using the dominated
convergence theorem, Sobolev inequality and Sobolev trace
inequality, then we get equation \eqref{2.15}, which completes the
proof of this lemma.

\end{proof}

With the above preparations, we can start the moving plane procedure for $\mu<0$
now. However, in the case $\mu<0$, it seems difficult to move the plane
$T_\lb$ because of the boundary value conditions of $v_\mu$. Instead of choosing $\mu$ arbitrary for $\mu
\geq 0$, we move the plane $\mu$ from $0$ to $-\infty$ step by step. In the first step, we will show that the parameter
$\mu$ can be moved from $0$ to $\mu_1=-\frac{\sqrt 2}{2}$. This procedure is composed of the following two lemmas.
First, we show that this procedure can be
started for $\mu$ small enough. More precisely, we have the
following result.
\begin{lemma}\label{t 2.5}
There exists $\mu_0<0$, such that for all
$\mu_0\leq \mu\leq 0$, we have $v_\mu\leq v^0_\mu$ for all $x\in \Sigma_0$.
\end{lemma}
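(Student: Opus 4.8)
The plan is to run the same scheme as in Lemma~\ref{t 2.2}, but now applying the integral inequality of Lemma~\ref{t 2.4} with $\lambda=0$ and letting $\mu\to0^-$. First note that $\lambda=0$ is an admissible choice in Lemma~\ref{t 2.4} precisely when $0\le\mu-\frac1{2\mu}$, i.e.\ when $-\frac{\sqrt2}{2}\le\mu<0$; in particular it covers a full left neighbourhood of $0$. For such $\mu$, inequality \eqref{2.15} with $\lambda=0$ reads
$$
\int_{\Sigma_0}|\nabla (v_\mu-v_\mu^0)^+|^2\,dx\le C_0\,\Theta(\mu)\int_{\Sigma_0}|\nabla (v_\mu-v_\mu^0)^+|^2\,dx,
$$
where
$$
\Theta(\mu)=\Big(\int_{A_\mu^0}\frac1{|x-p_\mu|^{2N}}\,dx\Big)^{\frac2N}+\Big(\int_{B_\mu^0}\frac1{|x-p_\mu|^{2(N-1)}}\,dx'\Big)^{\frac1{N-1}}.
$$
If I can show $C_0\,\Theta(\mu)<1$ once $|\mu|$ is small, then the integral on the right must vanish, and since $(v_\mu-v_\mu^0)^+\in L^{2^*}(\Sigma_0)$ the Sobolev inequality forces $(v_\mu-v_\mu^0)^+\equiv0$, i.e.\ $v_\mu\le v_\mu^0$ in $\Sigma_0$; this produces the required $\mu_0<0$. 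For $\mu=0$ the statement is already contained in Corollary~\ref{t 2.22}.

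So the heart of the matter is to prove $\Theta(\mu)\to0$ as $\mu\to0^-$ (the constant $C_0$ of Lemma~\ref{t 2.4} may be taken independent of $\mu$ near $0$, since $|x-p_\mu|^{N-2}v_\mu(x)=u\big(\frac{x-p_\mu}{|x-p_\mu|^2}+p_\mu\big)\le\|u\|_\infty$ uniformly). If $u\equiv0$ there is nothing to prove, so assume $u\not\equiv0$; then, combining Corollary~\ref{t 2.22} at $\mu=0$ with the strong maximum principle and the Hopf argument already used in the proof of Lemma~\ref{t 2.3}, one gets $v_0<v_0^0$ throughout $\Sigma_0$, so $A_0^0$ and $B_0^0$ are empty. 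Since by \eqref{2.1} we have $v_\mu\to v_0$ and $v_\mu^0\to v_0^0$ pointwise on $\Sigma_0$ as $\mu\to0^-$, it follows that $\chi_{A_\mu^0}\to0$ and $\chi_{B_\mu^0}\to0$ pointwise there. I would then split $\Sigma_0=(\Sigma_0\setminus B_\delta(0))\cup(\Sigma_0\cap B_\delta(0))$: on the outer piece, if $|\mu|<\delta/2$ then $|x-p_\mu|\ge\frac12|x|$ there, so the weight $|x-p_\mu|^{-2N}$ is dominated by the $\mu$-independent integrable function $2^{2N}|x|^{-2N}\chi_{\{|x|\ge\delta\}}$, and the dominated convergence theorem makes the outer contribution to the first integral tend to $0$; the inner piece is then made small by first choosing $\delta$ small. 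The boundary term is handled identically, with the Sobolev trace inequality replacing the Sobolev inequality.

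The main obstacle is precisely the inner piece $\Sigma_0\cap B_\delta(0)$: there the weight $|x-p_\mu|^{-2N}$ is singular at $p_\mu$, which tends to the boundary point $0\in\partial\Sigma_0$ as $\mu\to0$, so no $\mu$-uniform integrable majorant exists and soft convergence arguments do not apply. What must be shown is that $A_\mu^0\cap B_\delta(0)$ carries little $|x-p_\mu|^{-2N}$-mass, uniformly in $\mu$; for this I would exploit the geometric inequality $|x-p_\mu|>|x^0-p_\mu|$, valid for every $x\in\Sigma_0$ (because $x_1>0>\mu$), which makes the Kelvin factor of $v_\mu$ strictly smaller than that of $v_\mu^0$, together with the uniform bound on $|x-p_\mu|^{N-2}v_\mu$ and the uniform convergence $v_\mu-v_\mu^0\to v_0-v_0^0<0$ on annuli $\{c\delta\le|x|\le\delta\}$; the innermost ball $B_{c\delta}(0)$ then has to be absorbed by a direct estimate, using that $v_\mu^0$ is large near $p_\mu^0=(-\mu,0,\dots,0)\in B_\delta(0)$. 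Turning this heuristic into a uniform quantitative bound $\Theta(\mu)=o(1)$ is the only genuinely delicate step; everything else is a routine adaptation of the argument of Lemma~\ref{t 2.2}.
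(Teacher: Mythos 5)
Your skeleton is exactly the paper's: for $-\frac{\sqrt 2}{2}\le\mu<0$ the choice $\lambda=0$ is admissible in Lemma~\ref{t 2.4}, and the lemma follows once one shows $C_0\,\Theta(\mu)<1$ for $\mu$ close to $0^-$, where $\Theta(\mu)$ is the bracketed quantity in \eqref{2.15}; the paper disposes of this by noting (via Corollary~\ref{t 2.22}) that $\frac{1}{|x-p_\mu|^{2N}}\chi_{A_\mu^0}\to0$ and $\frac{1}{|x-p_\mu|^{2(N-1)}}\chi_{B_\mu^0}\to0$ a.e.\ as $\mu\to0^-$ and then invoking dominated convergence. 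You reproduce this away from the origin, with a correct majorant (indeed $|x-p_\mu|\ge|x|$ on $\Sigma_0$ for $\mu<0$), but then you stop: you declare the contribution of $A_\mu^0\cap B_\delta(0)$ and $B_\mu^0\cap B_\delta(0)$ to be ``the only genuinely delicate step'' and offer only heuristics (the comparison $|x-p_\mu|>|x^0-p_\mu|$, uniform convergence on annuli, largeness of $v_\mu^0$ near $p_\mu^0$) without carrying out any estimate. That is a genuine gap, not a routine detail: on $\Sigma_0\cap B_{2|\mu|}(0)$ the weight $|x-p_\mu|^{-2N}$ has total mass of order $|\mu|^{-N}$, and the boundary weight $|x-p_\mu|^{-2(N-1)}$ has mass of order $|\mu|^{-(N-1)}$ on the corresponding piece of $\partial\Sigma_0^1$, so unless one proves that $A_\mu^0$ and $B_\mu^0$ avoid, or carry small weighted measure in, a neighbourhood of the origin at scale $|\mu|$, the quantity $\Theta(\mu)$ need not be small at all, and the conclusion $C_0\,\Theta(\mu)<\frac12$ is simply not established. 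Nothing in your sketch converts the stated heuristics into such a bound, so the lemma is not proved by your argument.

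For what it is worth, you have put your finger on exactly the point where the paper's own two-line proof is thinnest: its appeal to the dominated convergence theorem presupposes a $\mu$-uniform integrable majorant for $|x-p_\mu|^{-2N}\chi_{A_\mu^0}$ (and for the boundary term), which it does not exhibit and which, near the origin where $p_\mu\to0\in\partial\Sigma_0$, is not available in the naive sense; the a.e.\ convergence also tacitly uses the strict inequality $v_0<v_0^0$, which you, unlike the paper, make explicit via the maximum principle and the Hopf argument of Lemma~\ref{t 2.3}. So your route coincides with the paper's and is more careful on the outer region, but the inner-region estimate that the argument requires is left by you as an openly unproven claim, and an explicit treatment of it is what would be needed to turn your proposal into a complete proof.
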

\begin{proof}
By Corollary \ref{t 2.22}, we know that $v_0\leq v_0^0$ for all $x\in \Sigma_0$, hence the functions
$\frac{1}{|x-p_\mu|^{2N}}\chi_{A_\mu^0}\to 0$ and
$\frac{1}{|x-p_\mu|^{2(N-1)}}\chi_{B_\mu^0} \to 0$ a.e. as $\mu\to
0^-$. We deduce from the dominate convergence theorem that
there exists $\mu_0<0$ small enough, such that for all $\mu_0\leq
\mu\leq 0$, the following inequality holds
$$
C_\lambda[(\int_{A_\mu^0}\frac
1{|x-p_\mu|^{2N}}\,dx)^{\frac2N}+(\int_{B_\mu^0}\frac
1{|x-p_\mu|^{2(N-1)}}\,dx')^{\frac 1{N-1}}]\leq \frac 12.
$$
Then we infer from Lemma \ref{t 2.4} that $\int_{A_\mu^0}|\nabla
(v_\mu-v_\mu^0)^+|\,dx=0$, that is $v_\mu\leq v_\mu^0$ in $\Sigma_0$.

\end{proof}

Next, we will show that the plane can be moved down
to $-\frac {\sqrt 2}{2}$. More precisely, we define
$$
\mu_1=\inf\{\mu<0:v_\mu\leq v_\mu^0\ {\rm in}\ \Sigma_0\},
$$
then we have
\begin{lemma}\label{t 2.6}
If $u\not\equiv 0$, then we must have $\mu_1\leq -\frac
{\sqrt 2}{2}$.
\end{lemma}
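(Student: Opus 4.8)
The plan is to argue by contradiction, following the scheme of the proof of Lemma~\ref{t 2.3}. Suppose $\mu_1>-\frac{\sqrt2}2$. The role of the number $-\frac{\sqrt2}2$ is that, for $\mu<0$, the hypothesis $\mu<\lambda\le\mu-\frac1{2\mu}$ of Lemma~\ref{t 2.4}, taken with $\lambda=0$, holds exactly when $-\frac{\sqrt2}2\le\mu<0$; so under our assumption Lemma~\ref{t 2.4} with $\lambda=0$ is still available at $\mu=\mu_1$ and throughout a left neighbourhood of $\mu_1$, and the aim is to contradict the minimality of $\mu_1$.

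First, by the continuity of $\mu\mapsto v_\mu$ and the definition of $\mu_1$ one has $v_{\mu_1}\le v_{\mu_1}^0$ in $\Sigma_0$. Next, I claim $v_{\mu_1}\equiv v_{\mu_1}^0$. If this failed, I would argue as follows. Since $p_{\mu_1}$ lies in $\{x_1<0\}$ while $\Sigma_0\subset\{x_1>0\}$, one has $|x-p_{\mu_1}|\ge|x^0-p_{\mu_1}|$ for $x\in\Sigma_0$; combining this with $v_{\mu_1}\le v_{\mu_1}^0$, the monotonicity of $f$ and the monotonicity of $h$, the same chain of inequalities used in the proof of Lemma~\ref{t 2.3} gives
$$
-\Delta v_{\mu_1}\le -\Delta v_{\mu_1}^0\quad\text{in }\Sigma_0 .
$$
By the strong maximum principle $v_{\mu_1}<v_{\mu_1}^0$ in the interior of $\Sigma_0$ (both functions are regular there, as the poles $p_{\mu_1}$ and $p_{\mu_1}^0=(-\mu_1,0,\dots,0)$ lie on $\BR$). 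Together with the Hopf lemma on $\partial\Sigma_0^1$ and the decay of $v_{\mu_1}$ at infinity, this yields $\frac1{|x-p_\mu|^{2N}}\chi_{A_\mu^0}\to0$ and $\frac1{|x-p_\mu|^{2(N-1)}}\chi_{B_\mu^0}\to0$ a.e. as $\mu\to\mu_1^-$, with majorants that are fixed integrable functions supported where $x_1\ge\mu_1-\delta$. By dominated convergence there is $\delta\in\big(0,\mu_1+\frac{\sqrt2}2\big)$ with
$$
C_\lambda\Big[\Big(\int_{A_\mu^0}\frac{dx}{|x-p_\mu|^{2N}}\Big)^{\frac2N}+\Big(\int_{B_\mu^0}\frac{dx'}{|x-p_\mu|^{2(N-1)}}\Big)^{\frac1{N-1}}\Big]<\frac12
$$
for all $\mu\in[\mu_1-\delta,\mu_1]$; since $\mu_1-\delta>-\frac{\sqrt2}2$, Lemma~\ref{t 2.4} applies and forces $\int_{\Sigma_0}|\nabla(v_\mu-v_\mu^0)^+|^2\,dx=0$, i.e. $v_\mu\le v_\mu^0$ in $\Sigma_0$ for every such $\mu$. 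This contradicts the definition of $\mu_1$, so $v_{\mu_1}\equiv v_{\mu_1}^0$.

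Finally, I would rule out $v_{\mu_1}\equiv v_{\mu_1}^0$ using $u\not\equiv0$, exactly as in the last step of the proof of Lemma~\ref{t 2.3}. The identity makes $v_{\mu_1}$ symmetric across $T_0=\{x_1=0\}$, hence regular at the reflected pole $p_{\mu_1}^0$, which --- since $|\mu_1|<\frac{\sqrt2}2<1$ --- lies in the interior of the portion $\{\mu_1<x_1<\mu_1-\frac1{\mu_1}\}$ of $\BR$ on which $v_{\mu_1}$ satisfies the homogeneous Neumann condition, while the boundary condition inherited through the symmetry from the reflected piece is the nonlinear one; matching the two forces $v_{\mu_1}$ to vanish at a point of $\BR$. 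Then $v_{\mu_1}\ge0$, $-\Delta v_{\mu_1}=h(\cdot)v_{\mu_1}^{\frac{N+2}{N-2}}\ge0$, and $v_{\mu_1}$ attains the value $0$ at a boundary point where $\partial v_{\mu_1}/\partial\nu=0$; by the Hopf lemma this is impossible unless $v_{\mu_1}\equiv0$, i.e. $u\equiv0$, contradicting our hypothesis. (Alternatively, if $u$ does not decay to $0$ at infinity, then $v_{\mu_1}$ blows up at $p_{\mu_1}$ but is finite at $p_{\mu_1}^0$, which already contradicts $v_{\mu_1}\equiv v_{\mu_1}^0$.) Hence $\mu_1\le-\frac{\sqrt2}2$.

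The step I expect to be the main obstacle is the one establishing $v_{\mu_1}\equiv v_{\mu_1}^0$: because $f$ and $g$ are only continuous, $v_{\mu_1}$ is merely a weak solution, so the strong maximum principle and the Hopf lemma have to be used in their De Giorgi--Nash--Moser form, and the local behaviour of $v_{\mu_1}$ near the poles $p_{\mu_1},p_{\mu_1}^0\in\BR$ as well as the integrable domination in the dominated-convergence argument (the monotonicity in $\mu$ of the relevant cut-offs of $A_\mu^0$, $B_\mu^0$) need the same care as in the proof of Lemma~\ref{t 2.3}. Turning the mismatch of boundary conditions into a boundary zero of $v_{\mu_1}$ in the final step is the other delicate point.
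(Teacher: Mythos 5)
Your proposal is correct and follows essentially the same route as the paper: contradiction from $\mu_1>-\frac{\sqrt2}{2}$, the monotonicity chain giving $-\Delta v_{\mu_1}\le-\Delta v_{\mu_1}^0$, the maximum principle plus dominated convergence feeding Lemma~\ref{t 2.4} to push $\mu$ slightly past $\mu_1$, and finally the Neumann/nonlinear boundary mismatch with the Hopf lemma to exclude $v_{\mu_1}\equiv v_{\mu_1}^0$ unless $u\equiv0$. In fact you spell out two points the paper leaves implicit (why $-\frac{\sqrt2}{2}$ is the admissible range for $\lambda=0$ in Lemma~\ref{t 2.4}, and the inequality $|x-p_{\mu_1}|\ge|x^0-p_{\mu_1}|$ on $\Sigma_0$ behind the second inequality in the chain), so no changes are needed.
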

\begin{proof}
We prove the conclusion by contradiction. Suppose on the contrary
that $\mu_1>- \frac {\sqrt 2}{2}$, then we claim
that $v_{\mu_1}\equiv v_{\mu_1}^{0}$.

We prove the claim by contradiction. Suppose $v_{\mu_1}\not\equiv
v_{\mu_1}^{0}$, then we show that the plane can be moved to
the left a little. That is, we will show that there exists
$\delta>0$, such that $v_\mu(x)\leq v_\mu^0(x)$ in
$\Sigma_0$ for all $\mu\in [\mu_1-\delta,\mu_1]$. This
contradicts the choice of $\mu_1$.

To prove this, we first infer from the continuity that
$v_{\mu_1}(x)\leq v_{\mu_1}^{0} (x)$ for $x\in \Sigma_0$.
Moreover, we have
\begin{eqnarray*}
h(|x-p_{\mu_1}|^{N-2}v_{\mu_1})v_{\mu_1}^{\frac{N+2}{N-2}}&=&\frac{f(|x-p_{\mu_1}|^{N-2}v_{\mu_1})}{|x-p_{\mu_1}|^{N+2}}\\
&\leq&\frac{f(|x-p_{\mu_1}|^{N-2}v_{\mu_1}^{0})}{|x-p_{\mu_1}|^{N+2}}
\\&=&\frac{f(|x-p_{\mu_1}|^{N-2}v_{\mu_1}^{0})}{[|x-p_{\mu_1}|^{N-2}
v_{\mu_1}^{0}]^{\frac{N+2}{N-2}}}(v_{\mu_1}^{0})^{\frac{N+2}{N-2}}\\
&\leq&\frac{f(|x^{0}-p_{\mu_1}|^{N-2}v_{\mu_1}^{0})}{[|x^{0}-p_{\mu_1}|^{N-2}
v_{\mu_1}^{0}]^{\frac{N+2}{N-2}}}(v_{\mu_1}^{0})^{\frac{N+2}{N-2}}\\
&=&h(|x^{0}-p_{\mu_1}|^{N-2}v_{\mu_1}^{0})(v_{\mu_1}^{0})^{\frac{N+2}{N-2}},
\end{eqnarray*}
where the first inequality holds since $f$ is nondecreasing, the
second inequality is a consequence of (ii) in Theorem \ref{t 1.1}.

 This equation implies
$$
-\Delta v_{\mu_1}\leq -\Delta v_{\mu_1}^{0},
$$
then we infer from the maximum principle that
$v_{\mu_1}<v^{0}_{\mu_1}$ in $\Sigma_{0}$ since
$v_{\mu_1}\not\equiv v_{\mu_1}^{0}$. Since $\frac
1{|x-p_\mu|^{2N}}\chi_{ A_\mu^0}\to 0,\ \frac
1{|x-p_\mu|^{2(N-1)}}\chi_{B_\mu^0}\to 0$ a.e. as
$\mu\to\mu_1$, moreover, as $\frac
1{|x-p_\mu|^{2N}}\chi_{A_\mu^0}\leq \frac
1{|x-p_\mu|^{2N}}\chi_{A_{\mu_1-\delta}^{0}}$ and $\frac
1{|x-p_\mu|^{2(N-1)}}\chi_{B_\mu^0}\leq \frac
1{|x-p_\mu|^{2(N-1)}}\chi_{B_{\mu_1-\delta}^{0}} $ for $\mu\in
[\mu_1-\delta, \mu_1]$ and some $\delta>0$, then the dominated
convergence theorem implies
$$
\int_{A_\mu^{0}}\frac
1{|x-p_\mu|^{2N}}\,dx+\int_{B_\mu^0}\frac
1{|x-p_\mu|^{2(N-1)}}\,dx'\to 0
$$
as $\mu\to \mu_1$. That is, there exists $\delta>0$, such that
$$
C_\lb[ (\int_{A_\mu^{0}}\frac 1{|x-p_\mu|^{2N}}\,dx)^{\frac
2N}+(\int_{B_\mu^0}\frac 1{|x-p_\mu|^{2(N-1)}}\,dx')^{\frac
1{N-1}}]<\frac 12
$$
for all $\mu\in[\mu_1-\delta,\mu_1]$. Then we infer from Lemma
\ref{t 2.4} that $v_\mu\leq v_\mu^0$ for all
$\mu\in[\mu_1-\delta,\mu_1]$. This contradicts the definition of
$\mu_1$. This proves the claim.

Similar to Lemma \ref{t 2.3}, we can show that $v_{\mu_1}(x)\equiv
v_{\mu_1}^0$ implies $v_{\mu_1}\equiv 0$. In fact, if
$v_{\mu_1}\equiv v_{\mu_1}^{0}$, then we deduce from the
Neumann boundary value condition and the nonlinear boundary value
condition that $v_{\mu_1}(x)=0$ for some $x\in \BR$, this
contradicts the Hopf Theorem unless $v_{\mu_1}\equiv 0$. If
$v_{\mu_1}\equiv 0$, then the proof of Theorem \ref{t 1.1} is
complete. So if $u\not\equiv 0$, then we must have $\mu_1\leq -\frac
{\sqrt{2}}{2}$.
\end{proof}

Now, let $\mu\in [\mu_1,0)$ fixed, then the same procedure as above shows that we move can the plane $T_\lambda$ from $\lb=0$ to
$\lambda=\mu$, in particular, we have $v_\mu(x)\leq v_\mu(x^\mu)$ for $x\in \Sigma_\mu$ and $\mu\in [\mu_1,0)$. Based on the information that
$v_{\mu_1}(x)\leq v_{\mu_1}(x^{\mu_1})$ for $x\in \Sigma_{\mu_1}$, we can move the plane $\mu$ from $\mu_1=-\frac{\sqrt 2}{2}$ to $\mu_2=-\frac{\sqrt 2+3}{4}$,
then for $\mu\in [\mu_2,\mu_1)$, we can move the plane $T_\lambda$ from $\mu_1$ to $\mu$. In particular, we have $v_\mu(x)\leq v_\mu(x^\mu)$ for $x\in \Sigma_\mu$ and $\mu\in [\mu_2,\mu_1)$. Continuing the above procedure, in the $(k+1)$th step, we can show that the parameter $\mu$ can be moved from $\mu_k$ to $\mu_{k+1}=\frac{\mu_k-\sqrt{\mu_k^2+2}}{2}$. A direct calculation shows that $\mu_k\to -\infty$ as $k\to \infty$. Hence, we have proved
\begin{proposition}\label{t 2.7}
For any $\mu<0$, we have $v_\mu(x)\leq v_\mu(x^\mu)$ for $x\in \Sigma_{\mu}$.
\end{proposition}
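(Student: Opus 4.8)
The plan is to prove the proposition by an induction that pushes the centre of the Kelvin transform from $0$ down to $-\infty$ through countably many stages. I would put $\mu_0=0$ and, for $k\ge0$, $\mu_{k+1}=\frac{\mu_k-\sqrt{\mu_k^2+2}}{2}$, so that $0=\mu_0>\mu_1>\mu_2>\cdots$, and prove by induction on $k$ that
\[
v_\mu(x)\le v_\mu(x^\mu)\quad\text{in }\Sigma_\mu\ \text{ for every }\mu\in[\mu_{k+1},\mu_k),
\]
carrying along the auxiliary fact $v_{\mu_k}(x)\le v_{\mu_k}(x^{\mu_k})$ in $\Sigma_{\mu_k}$. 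If $u\equiv0$ the statement is trivial, so I assume $u\not\equiv0$ throughout.

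\emph{Base step ($k=0$).} The starting point is $v_0\le v_0^0$ in $\Sigma_0$, which is Corollary~\ref{t 2.22} at $\mu=0$. Running the argument of Lemmas~\ref{t 2.5} and~\ref{t 2.6}, the centre $\mu$ is moved down from $0$; the admissible range for Lemma~\ref{t 2.4} with reflection plane $T_0$ is exactly $\{\mu<0:0\le\mu-\tfrac{1}{2\mu}\}=[-\tfrac{\sqrt2}{2},0)=[\mu_1,0)$, and for every $\mu$ in this range one gets $v_\mu\le v_\mu^0$ in $\Sigma_0$. Fixing such a $\mu$, I would then slide the plane $T_\lambda$ from $\lambda=0$ down to $\lambda=\mu$, using $v_\mu\le v_\mu^0$ as the initial inequality and Lemma~\ref{t 2.4} at each $\lambda\in(\mu,0]$ (admissible since $\lambda\le0\le\mu-\tfrac{1}{2\mu}$): if the process stopped at some $\lambda_1\in(\mu,0)$ then $v_\mu\equiv v_\mu^{\lambda_1}$, whence the Hopf lemma together with the mixed Neumann boundary conditions forces $v_\mu\equiv0$, i.e. $u\equiv0$, a contradiction. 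This gives $v_\mu(x)\le v_\mu(x^\mu)$ in $\Sigma_\mu$ for all $\mu\in[\mu_1,0)$, and letting $\mu\downarrow\mu_1$ gives $v_{\mu_1}(x)\le v_{\mu_1}(x^{\mu_1})$ in $\Sigma_{\mu_1}$.

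\emph{Inductive step and conclusion.} Given $v_{\mu_k}(x)\le v_{\mu_k}(x^{\mu_k})$ in $\Sigma_{\mu_k}$ — which plays, for the reflection plane $T_{\mu_k}$, the role that Corollary~\ref{t 2.22} played for $T_0$ — I would repeat the proofs of Lemmas~\ref{t 2.5} and~\ref{t 2.6} with $0$ replaced by $\mu_k$ throughout (note that Lemma~\ref{t 2.4} already allows $\lambda=\mu_k$ as soon as $\mu<\mu_k\le\mu-\tfrac{1}{2\mu}$; the dominated-convergence step uses that $\{v_{\mu_k}>v_{\mu_k}^{\mu_k}\}$ is null, and the no-early-stopping step uses the maximum principle, the Hopf lemma, and the monotonicity of $f,g$ and of $h,k$) to obtain $v_\mu\le v_\mu^{\mu_k}$ in $\Sigma_{\mu_k}$ for all $\mu$ with $\mu<\mu_k$ and $\mu_k\le\mu-\tfrac{1}{2\mu}$; solving $2\mu^2-2\mu_k\mu-1\le0$ shows that these conditions describe precisely $[\mu_{k+1},\mu_k)$. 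Then, for each fixed $\mu\in[\mu_{k+1},\mu_k)$, I would slide $T_\lambda$ from $\lambda=\mu_k$ down to $\lambda=\mu$ as in the base step (Lemma~\ref{t 2.4} applies since $\mu<\lambda\le\mu_k\le\mu-\tfrac{1}{2\mu}$, and the Hopf argument again forbids premature stopping), getting $v_\mu(x)\le v_\mu(x^\mu)$ in $\Sigma_\mu$; taking $\mu\downarrow\mu_{k+1}$ yields $v_{\mu_{k+1}}(x)\le v_{\mu_{k+1}}(x^{\mu_{k+1}})$ in $\Sigma_{\mu_{k+1}}$, closing the induction. To finish, squaring $2\mu_{k+1}-\mu_k=-\sqrt{\mu_k^2+2}$ gives $2\mu_{k+1}^2-2\mu_k\mu_{k+1}=1$; since $(\mu_k)$ is strictly decreasing it cannot be bounded below (else $\mu_k\to L<0$ and the identity would give $2L^2-2L^2=1$), so $\mu_k\to-\infty$, hence $\bigcup_{k\ge0}[\mu_{k+1},\mu_k)=(-\infty,0)$ and the proposition follows.

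\emph{Main anticipated difficulty.} The bulk of the work is that the entire first-stage argument has to be re-run at each of the infinitely many stages with the reflection plane $T_0$ replaced by $T_{\mu_k}$. The two places needing genuine checking are: that the Hopf-type argument which forces $v_\mu\equiv0$ still goes through when the transition points $x_1=\mu$ and $x_1=\mu-\tfrac{1}{\mu}$ of the mixed boundary data of $v_\mu$ sit relative to the reflection plane $\mu_k$ — it is exactly the symmetry requirement $\mu_k=\mu-\tfrac{1}{2\mu}$ here that fixes the recursion for $\mu_{k+1}$ — and that the constant $C_\lambda$ of Lemma~\ref{t 2.4} and the smallness of the volume and boundary integrals remain usable uniformly as $k$ grows, so the scheme does not stall before reaching $-\infty$.
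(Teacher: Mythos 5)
Your proposal follows essentially the same route as the paper: iterate the two-stage scheme of Lemmas \ref{t 2.5}--\ref{t 2.6} (move the centre $\mu$ within the window allowed by the constraint $\lambda\le\mu-\frac{1}{2\mu}$ of Lemma \ref{t 2.4}, then slide $T_\lambda$ down to $\lambda=\mu$), with the same thresholds $\mu_{k+1}=\frac{\mu_k-\sqrt{\mu_k^2+2}}{2}$ tending to $-\infty$. In fact you make explicit two points the paper only asserts — that the admissible range at stage $k$ is exactly $[\mu_{k+1},\mu_k)$ via $2\mu^2-2\mu_k\mu-1\le0$, and that $\mu_k\to-\infty$ — so the argument is correct and matches the paper's proof.
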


\begin{proposition}\label{t 2.8}
For fixed $(x_2,x_3,\cdots,x_N)\in \mathbb R^{N-1}$,
$u(x_1,x_2,x_3,\cdots,x_N)$ is nondecreasing as $x_1$ decreases.
\end{proposition}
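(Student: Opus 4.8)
\textbf{Proof proposal for Proposition~\ref{t 2.8}.}

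The plan is to unwind the Kelvin transform in the reflection inequalities already proved. Corollary~\ref{t 2.22} (for $\mu\ge 0$) and Proposition~\ref{t 2.7} (for $\mu<0$) together give, for \emph{every} $\mu\in\mathbb R$, that $v_\mu(x)\le v_\mu(x^\mu)$ for all $x\in\Sigma_\mu$. I would first rewrite this as a statement about $u$. The reflection hyperplane $T_\mu=\{x_1=\mu\}$ contains the pole $p_\mu=(\mu,0,\dots,0)$, so reflection in $T_\mu$ fixes $p_\mu$ and hence $|x^\mu-p_\mu|=|x-p_\mu|$. Writing $I_\mu(x)=\frac{x-p_\mu}{|x-p_\mu|^2}+p_\mu$ for the inversion centred at $p_\mu$, definition \eqref{2.1} then converts $v_\mu(x)\le v_\mu(x^\mu)$ (after cancelling the common positive factor $|x-p_\mu|^{2-N}$) into
\[
u\bigl(I_\mu(x)\bigr)\le u\bigl(I_\mu(x^\mu)\bigr),\qquad x\in\Sigma_\mu .
\]

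Next I would record two elementary facts about $I_\mu$. Setting $z=x-p_\mu$, one has $I_\mu(x)-p_\mu=z/|z|^2$, while $x^\mu-p_\mu=\bar z$ with $\bar z=(-z_1,z_2,\dots,z_N)$, so that $I_\mu(x^\mu)-p_\mu=\bar z/|z|^2=\overline{\,z/|z|^2\,}$; this says exactly that $I_\mu(x^\mu)=\bigl(I_\mu(x)\bigr)^\mu$, i.e. the inversion centred at $p_\mu$ commutes with reflection in $T_\mu$. Moreover $z\mapsto z/|z|^2$ preserves the quadrant $\{z_1>0,\ z_N>0\}$ and is an involution, so the map $x\mapsto I_\mu(x)$ is a bijection of $\Sigma_\mu$ onto itself. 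Combining these with the previous display, as $x$ ranges over $\Sigma_\mu$ the point $y=I_\mu(x)$ ranges over all of $\Sigma_\mu$, and we obtain
\[
u(y)\le u(y^\mu)\qquad\text{for every }\mu\in\mathbb R\text{ and every }y\in\Sigma_\mu ,
\]
equivalently $u(y_1,x')\le u(2\mu-y_1,x')$ whenever $y_1>\mu$ and $x'=(x_2,\dots,x_N)$ with $x_N>0$.

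Finally I would deduce the monotonicity. Fix $x'=(x_2,\dots,x_N)$ with $x_N>0$ and take any $a<b$. Choosing $\mu=\tfrac{a+b}{2}$ we have $b>\mu$, so $(b,x')\in\Sigma_\mu$ and the inequality above gives $u(b,x')\le u(2\mu-b,x')=u(a,x')$. Since $a<b$ are arbitrary, $x_1\mapsto u(x_1,x')$ is nonincreasing, i.e. $u$ is nondecreasing as $x_1$ decreases; the boundary case $x_N=0$ then follows since $u\in C^0(\overline{\R})$. I do not expect a genuine obstacle here once Corollary~\ref{t 2.22} and Proposition~\ref{t 2.7} are in hand: the argument is just bookkeeping with the Kelvin transform. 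The only point needing care is the identity $I_\mu(x^\mu)=\bigl(I_\mu(x)\bigr)^\mu$ together with the fact that $I_\mu$ is a bijection of $\Sigma_\mu$, since it is precisely this that lets one upgrade the inequality for the transformed functions from the image of a single inversion to all of $\Sigma_\mu$, and hence to a monotonicity statement for $u$ itself.
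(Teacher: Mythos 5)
Your proposal is correct and follows essentially the same route as the paper: both unwind the Kelvin transform in the inequality $v_\mu\le v_\mu^\mu$ (from Corollary~\ref{t 2.22} and Proposition~\ref{t 2.7}), using that the pole $p_\mu$ lies on $T_\mu$ so that $|x^\mu-p_\mu|=|x-p_\mu|$, to get $u(x_1,x')\le u(2\mu-x_1,x')$ for $x_1\ge\mu$, and then let $\mu$ vary to conclude monotonicity. Your write-up merely makes explicit the bookkeeping (the identity $I_\mu(x^\mu)=(I_\mu(x))^\mu$ and the bijectivity of $I_\mu$ on $\Sigma_\mu$) that the paper leaves implicit.
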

\begin{proof}
By Corollary \ref{t 2.22} and Proposition \ref{t 2.7}, we have $v_\mu\leq
v_\mu^\mu$ for any $\mu\in \mathbb R$. In particular, we have
$$\frac
1{|x-p_\mu|^{N-2}}v(p_\mu+\frac{x-p_\mu}{|x-p_\mu|^2})\leq \frac
1{|x_\mu-p_\mu|^{N-2}}v(p_\mu+\frac{x^\mu-p_\mu}{|x^\mu-p_\mu|^2}).$$
Since $|x-p_\mu|=|x^\mu-p_\mu|$ and $x_1^\mu-\mu=\mu-x_1$, then we
have $$u(x_1,x_2,x_3,\cdots,x_N)\leq
u(2\mu-x_1,x_2,x_3,\cdots,x_N)$$ for $x_1\geq \mu$. Because $\mu$ is
arbitrary, then we conclude that $u$ is nondecreasing as $x_1$
decreases.

\end{proof}

\begin{proof}[Proof of Theorem \ref{t 1.1}:] To prove Theorem \ref{t 1.1},
we first start the moving plane method in the $x_2$ direction. For this
purpose, we define the Kelvin transformation $v_\mu$ of $u$ at
$p_\mu=(0,\mu,0,\cdots,0)$ as
$$
v_\mu(x)=\frac{1}{|x-p_\mu|^{N-2}}u(p_\mu+\frac{x-p_\mu}{|x-p_\mu|^{2}}).
$$
Moreover, we define $\Sigma_\lb=\{x\in \R\mid x_2>\lb\},
T_\lb=\{x\in \R\mid x_2=\lb\}$ and $x^\lb$ be the reflection of $x$
with respect to $T_\lb$ as before.

If we denote $v_\mu^\lb=v_\mu(x^\lb)$, then by the same reason as
Lemma \ref{t 2.1} and Lemma \ref{t 2.4}, we have the following
inequality
\begin{equation}\label{2.18}
\begin{split}
&\int_{\Sigma_\lambda}|\nabla (v-v^\lambda)^+|^2\,dx\\&\leq
C_\lambda[(\int_{A_\mu^\lambda}\frac
1{|x-p_\mu|^{2N}}\,dx)^{\frac2N}+(\int_{B_\mu^\lambda}\frac
1{|x-p_\mu|^{2(N-1)}}\,dx')^{\frac
1{N-1}}]\cdot(\int_{\Sigma_\lambda}|\nabla(v-v^\lambda)^+|^2\,dx),
\end{split}
\end{equation}
where $C_\lb$ is a nonincreasing constant, $A_\mu^\lb=\{x\in
\Sigma_\lb|v_\mu(x)>v_\mu^\lb\}$ and \\$B_\mu^\lb=\{x\in
\partial \Sigma_\lb|v_\mu(x)>v_\mu^\lb, x_N=0\}$. With the above
inequality, we can start the moving plane method for $\lb$ large
enough. Now, move the plane $T_\lb$ from the right to the left and
suppose the process stops at some $\lb_1$. If $\lb_1>\mu$, then we
can prove that $v_\mu$ is symmetric with respect to $T_{\lb_1}$ as
before. This means that $v_\mu$ is regular at $p_\mu$. In particular, we
have $u\in L^{\frac{2N}{N-2}}(\R)$. Otherwise, we must have
$\lb_1\leq \mu$, which implies $v_\mu\leq v_\mu^\mu$. At the same
time, we can start moving plane method from the left. Similarly, if
this process stops at some $\lb_1<\mu$, then $u\in
L^{\frac{2N}{N-2}}(\R)$, otherwise, we have $\lb_1=\mu$ and
$v_\mu^\mu\leq v_\mu$.

We have proved the following fact: if the moving plane procedure
stops at some $\lb_1\neq \mu$, then $u\in L^{\frac{2N}{N-2}}(\R)$,
otherwise, we must have $v_\mu\equiv v_\mu^\mu$ and hence $u\equiv
u^\mu$.

With the above preparations, we can prove Theorem \ref{t 1.1} now.
We distinguish two cases.

Case 1: During the moving plane method process in the
$x_2,\cdots,x_{N-1}$ directions, there exists some $\mu$ and some
direction, such that $\lb_1\neq \mu$, i.e., the moving plane method
stops at some $\lb_1\neq \mu$. Then we conclude that $u$ is regular
at infinity and hence $u\in L^{\frac{2N}{N-2}}(\R)$. This is
impossible unless $u\equiv 0$ since $u$ is nondecreasing as $x_1$
decreases by Proposition \ref{t 2.8}.

Case 2: For all $\mu$ and all directions $x_2,\cdots,x_{N-1}$, the
moving plane method stops at $\lb_1=\mu$. In this case, the function
$u$ is independent of $x_2,\cdots,x_{N-1}$ since $\mu$ is arbitrary.
So we have $u=u(x_1,x_N)$. Moreover, Proposition \ref{t 2.8} implies
that $u(x_1,x_n)$ is nondecreasing as $x_1$ decreases. On the other
hand, since $u$ is bounded, then the limit $w(x_N)=\lim_{x_1\to
-\infty} u(x_1,x_N)$ exists and $w(x_N)$ satisfies
\begin{equation}\label{2.19}
\left\{
  \begin{array}{ll}
  \displaystyle
w^{\prime\prime}(x_N)=-f(w)    &{\rm for}\quad x_N>0,    \\
\displaystyle
\\ \frac{\partial w}{\partial x_N}=-g(u) &{\rm for}\quad x_N=0.
\end{array}
\right.
\end{equation}
If $w\not\equiv c$ with $f(c)=g(c)=0$, then the first equation
implies that $w$ is a concave function, while the second equation
implies $w'(0)<0$. Hence there exists $r_0>0$ such that $w(x_N)<0$
for $x_N>r_0$, this contradicts that $u$ is a nonnegative solution to
problem \eqref{1.1}.

Finally, since $w\equiv c$ with $f(c)=g(c)=0$ and $f,g$ are
nondecreasing, then equation \eqref{1.1} turns to be
$$
\left\{
  \begin{array}{ll}
  \displaystyle
-\Delta u=0    &{\rm in}\quad \R,    \\
\displaystyle
\\ \frac{\partial u}{\partial \nu}=0 &{\rm on}\quad \partial \R.
\end{array}
\right.
$$
Since $u$ is bounded, then the classical Liouville theorem implies
$u\equiv c$ with $f(c)=g(c)=0$. This finishes the proof of Theorem
\ref{t 1.1}.

\end{proof}

\section{\bf Proof of Theorem \ref{t 1.2}}
The sprit of the proof of Theorem \ref{t 1.2} is the same as the
proof of Theorem \ref{t 1.1}. We only give the difference between
the two problems.

As before, for any $\mu\in \mathbb R $, we define the Kelvin
transformation $v_\mu$ of $u$ at $p_\mu=(\mu,0,...,0)$ as
$$
v_\mu(x)=\frac
1{|x-p_\mu|^{N-2}}u(p_\mu+\frac{x-p_\mu}{|x-p_\mu|^2}),
$$
then a direct calculations shows that $v_\mu$ satisfies
\begin{equation}\label{3.1}
 \left\{
\begin{array}{ll}
-\Delta v_\mu=\frac{f(|x-p_\mu|^{N-2}v_\mu(x))}{[|x-p_\mu|^{N-2}v_\mu(x)]^{\frac{N+2}{N-2}}}v_\mu(x)^{\frac{N+2}{N-2}}, & \ x\in \R,\\
\\ \frac{\partial v_\mu}{\partial \nu}=\frac{g(|x-p_\mu|^{N-2}v_\mu(x))}{[|x-p_\mu|^{N-2}v_\mu(x)]^{\frac{N}{N-2}}}v_\mu(x)^{\frac{N}{N-2}}, & \ x\in \{x\in \partial \R:\mu-\frac
1\mu<x_1<\mu\},\\
\\ v_\mu=0,& \ x\in \{x\in \partial \R :x_1>\mu\ {\rm or}\ x_1<\mu-\frac
1\mu\}
\end{array}
\right.
\end{equation}
for $\mu\geq 0$, where we interpret $\frac 1\mu=+\infty$ for
$\mu=0$. However, for $\mu<0$, $v_\mu$ satisfies
\begin{equation}\label{3.2}
 \left\{
\begin{array}{ll}
-\Delta v_\mu=\frac{f(|x-p_\mu|^{N-2}v_\mu(x))}{[|x-p_\mu|^{N-2}v_\mu(x)]^{\frac{N+2}{N-2}}}v_\mu(x)^{\frac{N+2}{N-2}}, & \ x\in \R,\\
\\ \frac{\partial v_\mu}{\partial \nu}=\frac{g(|x-p_\mu|^{N-2}v_\mu(x))}{[|x-p_\mu|^{N-2}v_\mu(x)]^{\frac{N}{N-2}}}v_\mu(x)^{\frac{N}{N-2}}, & \ x\in\{x\in \partial \R:x_1<\mu\ {\rm or}\ x_1>\mu-\frac
1\mu\}, \\
\\ v_\mu=0,& \ x\in \{x\in \partial \R:\mu<x_1<\mu-\frac
1\mu\}.
\end{array}
\right.
\end{equation}

Moreover, we infer from the definitions of $h$ and $k$ that $v_\mu$
satisfies
\begin{equation}\label{3.3}
 \left\{
\begin{array}{ll}
-\Delta v_\mu=h(|x-p_\mu|^{N-2}v_\mu(x))v_\mu(x)^{\frac{N+2}{N-2}}, & \ x\in \R,\\
\\ \frac{\partial v_\mu}{\partial \nu}=k(|x-p_\mu|^{N-2}v_\mu(x))v_\mu(x)^{\frac{N}{N-2}}, & \ x\in \{x\in \partial \R:\mu-\frac
1\mu<x_1<\mu\},\\
\\ v_\mu=0,& \ x\in \{x\in \partial \R:x_1>\mu\ {\rm or}\ x_1<\mu-\frac
1\mu\}
\end{array}
\right.
\end{equation}
for $\mu\geq 0$ and $v_\mu$ satisfies
\begin{equation}\label{3.4}
 \left\{
\begin{array}{ll}
-\Delta v_\mu=h(|x-p_\mu|^{N-2}v_\mu(x))v_\mu(x)^{\frac{N+2}{N-2}}, & \ x\in \R,\\
\\ \frac{\partial v_\mu}{\partial \nu}=k(|x-p_\mu|^{N-2}v_\mu(x))v_\mu(x)^{\frac{N}{N-2}}, & \ x\in\{x\in \partial \R:x_1<\mu\ {\rm or}\ x_1>\mu-\frac
1\mu\}, \\
\\ v_\mu=0,& \ x\in \{x\in \partial \R:\mu<x_1<\mu-\frac
1\mu\}
\end{array}
\right.
\end{equation}
for $\mu<0$.

As before, we first start the moving plane method in the $x_1$
direction for $\mu\geq 0$. For problem \eqref{3.4}, we have the same
inequality as Lemma \ref{t 1.2}.
\begin{lemma}\label{t 3.1}
For any fixed $\lambda>\mu\geq 0$, the functions $v_\mu$ and
$(v_\mu-v_\mu^\lambda)^+$ belong to $ L^{2^*}(\Sigma_\lambda)\cap
L^\infty(\Sigma_\lambda)$ with $2^*=\frac{2N}{N-2}$. Further more,
if we denote $A_\mu^\lambda=\{x\in \Sigma_\lambda|
v_\mu>v_\mu^\lambda\}$ and $B_\mu^\lambda=\{x\in \partial
\Sigma_\lambda^1|v_\mu(x)>v_\mu^\lambda(x)\}$, then there exists
$C_\lambda>0$, which is nonincreasing in $\lambda$, such that
\begin{equation}\label{3.7}
\int_{\Sigma_\lambda}|\nabla (v_\mu-v_\mu^\lambda)^+|^2\,dx\leq
C_\lambda(\int_{A_\mu^\lambda}\frac
1{|x-p_\mu|^{2N}}\,dx)^{\frac2N}\cdot(\int_{\Sigma_\lambda}|\nabla(v_\mu-v_\mu^\lambda)^+|^2\,dx).
\end{equation}
\end{lemma}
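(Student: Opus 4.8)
The plan is to mirror the proof of Lemma~\ref{t 2.1} almost verbatim, since for $\mu\geq 0$ the boundary piece $\partial\Sigma_\lambda^1$ on which the nonlinear (Neumann-type) condition is active plays no role: for $\lambda>\mu\geq 0$ the half-hyperplane $\partial\Sigma_\lambda^1$ lies entirely in the region $\{x_1>\mu\}$, and by \eqref{3.3} this is precisely where $v_\mu=0$, i.e.\ the Dirichlet condition holds. First I would note that $\lambda>\mu$ gives an $r>0$ with $\Sigma_\lambda\subset\R\setminus B_r(p_\mu)$, so by \eqref{2.2} and the $|x-p_\mu|^{2-N}$ decay of the Kelvin transform we get $v_\mu,(v_\mu-v_\mu^\lambda)^+\in L^{2^*}(\Sigma_\lambda)\cap L^\infty(\Sigma_\lambda)$.

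Next I would introduce the same cutoff $\eta=\eta_\ep$ centered at $p_\mu^\lambda$, set $\varphi=\eta^2(v_\mu-v_\mu^\lambda)^+$ and $\psi=\eta(v_\mu-v_\mu^\lambda)^+$, and write down the identity $|\nabla\psi|^2=\nabla(v_\mu-v_\mu^\lambda)\nabla\varphi+[(v_\mu-v_\mu^\lambda)^+]^2|\nabla\eta|^2$. Then I test the equation for $v_\mu-v_\mu^\lambda$ (the analogue of \eqref{2.8}, valid in $\Sigma_\lambda\setminus\{p_\mu^\lambda\}$) against $\varphi$. Integration by parts produces a boundary term $\int_{\partial\Sigma_\lambda^1}\frac{\partial(v_\mu-v_\mu^\lambda)}{\partial\nu}\varphi\,dx'$; here is the one genuine difference from Lemma~\ref{t 2.1}, and it is where the Dirichlet condition is used. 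On $\partial\Sigma_\lambda^1\subset\{x_1>\mu\}$ both $v_\mu$ and $v_\mu^\lambda$ vanish (the reflected point also has first coordinate $>\mu$ when $\lambda>\mu$), so $(v_\mu-v_\mu^\lambda)^+=0$ there and hence $\varphi\equiv 0$ on $\partial\Sigma_\lambda^1$; the boundary term simply drops out, which is even cleaner than the sign argument $\frac{\partial(v-v^\lambda)}{\partial\nu}\leq 0$ used in the Neumann case. (One must check $A_\mu^\lambda$ does not touch this boundary, which follows for the same reason.)

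From there the argument is identical to \eqref{2.9}--\eqref{2.13}: using $-\Delta v_\mu=h(|x-p_\mu|^{N-2}v_\mu)v_\mu^{(N+2)/(N-2)}$ from \eqref{3.3}, the monotonicity of $h$, the elementary inequality $a^{(N+2)/(N-2)}-b^{(N+2)/(N-2)}\leq C a^{4/(N-2)}(a-b)$ for $a\geq b\geq 0$, the boundedness of $|x-p_\mu|^{N-2}v_\mu$ on $\Sigma_\lambda$ (which yields a constant $C_\lambda$ nonincreasing in $\lambda$ since $\Sigma_{\lambda'}\subset\Sigma_\lambda$), the decay bound $v_\mu^{4/(N-2)}\leq C|x-p_\mu|^{-4}$, Hölder with exponents $N/2$ and $N/(N-2)$, and finally the Sobolev inequality on $\Sigma_\lambda$. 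The term $I_\ep$ vanishes as $\ep\to 0$ by the same Hölder estimate $I_\ep\leq\big(\int_{D_\ep^1\cup D_\ep^2}[(v_\mu-v_\mu^\lambda)^+]^{2^*}\big)^{2/2^*}\big(\int|\nabla\eta|^N\big)^{2/N}$ together with $\int|\nabla\eta_\ep|^N\leq C$ and $(v_\mu-v_\mu^\lambda)^+\in L^{2^*}$. Letting $\ep\to 0$ and invoking dominated convergence gives \eqref{3.7}.

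I do not expect any real obstacle here; the proof is essentially a transcription. The only point requiring care—and the only place the Dirichlet versus Neumann distinction enters—is the boundary term on $\partial\Sigma_\lambda^1$, and as noted it is actually easier to handle in the Dirichlet case because the test function vanishes outright rather than merely being multiplied by a quantity of favorable sign. Everything else (the cutoff estimates, the Hölder/Sobolev chain, the constant $C_\lambda$ being nonincreasing) is word-for-word the same as Lemma~\ref{t 2.1}.
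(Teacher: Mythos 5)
Your route is the paper's: transcribe the proof of Lemma \ref{t 2.1} and observe that the boundary integral over $\partial\Sigma_\lambda^1$ drops out. However, your justification of that step contains a false claim: you assert that for $x\in\partial\Sigma_\lambda^1$ the reflected point $x^\lambda$ also has first coordinate $>\mu$, so that $v_\mu^\lambda$ vanishes there as well. This is not true. The reflected point has first coordinate $2\lambda-x_1$, which tends to $-\infty$ as $x_1\to+\infty$; in particular, for $x_1\in\bigl(2\lambda-\mu,\,2\lambda-\mu+\frac{1}{\mu}\bigr)$ when $\mu>0$ (and for all $x_1>2\lambda$ when $\mu=0$) the point $x^\lambda$ lies in the portion of $\partial\R$ where the nonlinear Neumann condition in \eqref{3.3} is in force, and there $v_\mu^\lambda$ has no reason to vanish. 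The correct, and simpler, observation — which is what the paper's one-line proof rests on — is that only $v_\mu$ needs to vanish: on $\partial\Sigma_\lambda^1$ one has $x_1>\lambda>\mu$, so $v_\mu=0$ by the Dirichlet part of \eqref{3.3}, while $v_\mu^\lambda\geq 0$ because $u\geq 0$; hence $(v_\mu-v_\mu^\lambda)^+=0$ and therefore $\varphi=\eta^2(v_\mu-v_\mu^\lambda)^+=0$ on $\partial\Sigma_\lambda^1$, so the term $\int_{\partial\Sigma_\lambda^1}\frac{\partial(v_\mu-v_\mu^\lambda)}{\partial\nu}\varphi\,dx'$ contributes nothing (equivalently, $\frac{\partial(v_\mu-v_\mu^\lambda)}{\partial\nu}\varphi=0$ there, as the paper states). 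With this one-line repair your argument coincides with the paper's: the lemma reduces to the chain \eqref{2.9}--\eqref{2.13} of Lemma \ref{t 2.1}, and the remaining ingredients you list — the cutoff estimates, monotonicity of $h$, the decay bound, H\"older and Sobolev, and $I_\ep\to 0$ — are carried over correctly.
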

\begin{proof}
We observe from the boundary value condition of $v_\mu$ that
$$
\frac{\partial(v_\mu-v_\mu^\lb)}{\partial \nu}\varphi(x)=0
$$
for any $x\in \partial \Sigma_\lb^1$. Then the rest of the proof is
the same as the proof of Lemma \ref{t 2.1}.
\end{proof}

With the above inequality, we can start the moving plane procedure
now. Similar to the proof of Theorem \ref{t 1.1}, we have
\begin{proposition}\label{t 3.2}
For any $\mu\geq 0$, we have $v_\mu(x)\leq v_\mu^\mu(x)$ for $x\in \Sigma_\mu$.
\end{proposition}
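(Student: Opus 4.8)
The plan is to mimic exactly the argument used for the Neumann problem (Lemmas \ref{t 2.2} and \ref{t 2.3} together with Corollary \ref{t 2.22}), now powered by the simpler integral inequality \eqref{3.7} of Lemma \ref{t 3.1}. Fix $\mu\geq 0$ and recall the Kelvin transform $v_\mu$ decays like $|x-p_\mu|^{2-N}$, so $v_\mu,(v_\mu-v_\mu^\lambda)^+\in L^{2^*}(\Sigma_\lambda)\cap L^\infty(\Sigma_\lambda)$ for every $\lambda>\mu$. First I would start the moving plane: since $\int_{\Sigma_\lambda}|x-p_\mu|^{-2N}\,dx\to 0$ as $\lambda\to+\infty$, the coefficient $C_\lambda(\int_{A_\mu^\lambda}|x-p_\mu|^{-2N}\,dx)^{2/N}$ in \eqref{3.7} is $<1$ for $\lambda$ large; hence $\int_{\Sigma_\lambda}|\nabla(v_\mu-v_\mu^\lambda)^+|^2\,dx=0$, i.e. $v_\mu\leq v_\mu^\lambda$ in $\Sigma_\lambda$, so the procedure can be started.

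Next I would define $\lambda_1=\inf\{\lambda:v_\mu\leq v_\mu^\lambda\text{ in }\Sigma_\lambda\}$ and show $\lambda_1\leq\mu$ whenever $u\not\equiv0$. Arguing by contradiction, if $\lambda_1>\mu$ then by continuity $v_\mu\leq v_\mu^{\lambda_1}$; assuming $v_\mu\not\equiv v_\mu^{\lambda_1}$, the monotonicity of $f$ and the monotonicity of $h$ (assumption (ii)) give, exactly as in the chain of inequalities preceding \eqref{2.14}, that $-\Delta v_\mu\leq-\Delta v_\mu^{\lambda_1}$ in $\Sigma_{\lambda_1}$, so the strong maximum principle yields $v_\mu<v_\mu^{\lambda_1}$ strictly. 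Then $|x-p_\mu|^{-2N}\chi_{A_\mu^\lambda}\to 0$ a.e. and is dominated by $|x-p_\mu|^{-2N}\chi_{A_\mu^{\lambda_1-\delta}}$, so by dominated convergence $\int_{A_\mu^\lambda}|x-p_\mu|^{-2N}\,dx\to 0$ as $\lambda\to\lambda_1^-$, making the coefficient in \eqref{3.7} less than $\tfrac12$ for $\lambda\in[\lambda_1-\delta,\lambda_1]$; hence $v_\mu\leq v_\mu^\lambda$ on this whole interval, contradicting the definition of $\lambda_1$. Therefore $v_\mu\equiv v_\mu^{\lambda_1}$.

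The one genuinely different point from Theorem \ref{t 1.1} — and the step I expect to be the main obstacle — is ruling out $v_\mu\equiv v_\mu^{\lambda_1}$ for the mixed nonlinear-Dirichlet boundary. Here symmetry across $T_{\lambda_1}$ would force the boundary pieces $\Gamma_1$ and $\Gamma_0$ (after inversion, the arcs where the nonlinear-Neumann versus the Dirichlet condition holds) to be interchanged, which is incompatible unless $v_\mu$ degenerates: on the reflected Dirichlet portion one gets $v_\mu=0$, and combined with the nonlinear boundary condition on the opposite side this forces $v_\mu(x_0)=0$ at an interior-of-boundary point together with vanishing normal derivative there, contradicting the Hopf lemma unless $v_\mu\equiv 0$, i.e. $u\equiv 0$. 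I would spell out this incompatibility of the two boundary conditions under reflection carefully, since it replaces the clean ``Neumann + nonlinear $\Rightarrow$ Hopf'' argument of Lemma \ref{t 2.3}.

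Finally, combining ``$v_\mu\leq v_\mu^\lambda$ for all $\lambda>\mu$'' with $\lambda_1\leq\mu$ gives $v_\mu\leq v_\mu^\mu$ in $\Sigma_\mu$; translating back through the Kelvin transform (using $|x-p_\mu|=|x^\mu-p_\mu|$ and $x_1^\mu-\mu=\mu-x_1$, as in Proposition \ref{t 2.8}) yields $u(x_1,x')\leq u(2\mu-x_1,x')$ for $x_1\geq\mu$, which is the asserted inequality $v_\mu(x)\leq v_\mu^\mu(x)$ on $\Sigma_\mu$. This completes the proof of Proposition \ref{t 3.2}.
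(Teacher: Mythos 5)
Your proposal is correct and follows essentially the same route as the paper, which proves Proposition \ref{t 3.2} simply by repeating the scheme of Lemmas \ref{t 2.2}--\ref{t 2.3} and Corollary \ref{t 2.22} with the integral inequality \eqref{3.7} of Lemma \ref{t 3.1} in place of \eqref{2.7}. Your extra care in the symmetry-exclusion step (Dirichlet data transported by the reflection forcing $v_\mu=0$ on the nonlinear-Neumann portion, then Hopf's lemma) is exactly the natural analogue of the argument in Lemma \ref{t 2.3} that the paper leaves implicit here.
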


Next, we consider the case $\mu<0$. Similar to the result for
problem \eqref{1.1}, we have the following result.
\begin{lemma}\label{t 3.3}
Suppose that $\mu<0$, then for any fixed $\mu<\lambda< \mu-\frac
1{2\mu}$, the functions $v_\mu$ and $(v_\mu-v_\mu^\lambda)^+$ belong
to $ L^{2^*}(\Sigma_\lambda)\cap L^\infty(\Sigma_\lambda)$ with
$2^*=\frac{2N}{N-2}$. Further more, if we denote
$A_\mu^\lambda=\{x\in \Sigma_\lambda| v_\mu>v_\mu^\lambda\}$ and
$B_\mu^\lambda=\{x\in \partial
\Sigma_\lambda^1|v_\mu(x)>v_\mu^\lambda(x)\}$, then there exists
$C_\lambda>0$, which is nonincreasing in $\lambda$, such that
\begin{equation}\label{3.8}
\begin{split}
&\int_{\Sigma_\lambda}|\nabla (v_\mu-v_\mu^\lambda)^+|^2\,dx\\&\leq
C_\lambda[(\int_{A_\mu^\lambda}\frac
1{|x-p_\mu|^{2N}}\,dx)^{\frac2N}+(\int_{B_\mu^\lambda}\frac
1{|x-p_\mu|^{2(N-1)}}\,dx')^{\frac
1{N-1}}]\cdot(\int_{\Sigma_\lambda}|\nabla(v_\mu-v_\mu^\lambda)^+|^2\,dx).
\end{split}
\end{equation}
\end{lemma}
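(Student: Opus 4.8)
The plan is to mirror the proof of Lemma \ref{t 2.4} essentially verbatim, since the only structural difference between problem \eqref{1.1} and problem \eqref{1.2} in the region being moved is the Dirichlet condition on part of $\partial\R$ rather than the homogeneous Neumann condition, and — crucially — for $\mu<0$ and $\mu<\lambda<\mu-\frac{1}{2\mu}$ the reflected hyperplane $T_\lambda$ has been chosen so that the portion of $\partial\Sigma_\lambda^1$ on which $v_\mu$ carries a nonlinear Robin-type condition is exactly the image of $\{x_1<\mu \text{ or } x_1>\mu-\frac1\mu\}$, while the Dirichlet part contributes nothing to the boundary integral. So first I would fix $\mu<0$ and $\lambda$ in the stated range, note that $\Sigma_\lambda\subset\R\setminus B_r(p_\mu)$ for some $r>0$, and deduce from \eqref{2.2} and the decay of $v_\mu$ that $v_\mu,(v_\mu-v_\mu^\lambda)^+\in L^{2^*}(\Sigma_\lambda)\cap L^\infty(\Sigma_\lambda)$, exactly as in Lemma \ref{t 2.1}.

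Next I would introduce the same cutoff function $\eta=\eta_\ep$ centered at $p_\mu^\lambda$, set $\varphi=\eta_\ep^2(v_\mu-v_\mu^\lambda)^+$ and $\psi=\eta_\ep(v_\mu-v_\mu^\lambda)^+$, and test the difference equation for $v_\mu-v_\mu^\lambda$ (obtained from \eqref{3.4}, or rather its analogue on $\Sigma_\lambda\setminus\{p_\mu^\lambda\}$) against $\varphi$. Integration by parts produces a bulk term $\int_{A_\mu^\lambda}-\Delta(v_\mu-v_\mu^\lambda)\varphi\,dx$ plus the boundary term $\int_{\partial\Sigma_\lambda^1}\frac{\partial(v_\mu-v_\mu^\lambda)}{\partial\nu}\varphi\,dx'$ plus the cutoff error $I_\ep$. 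On the Dirichlet part of $\partial\Sigma_\lambda^1$ we have $v_\mu=v_\mu^\lambda=0$, hence $\varphi=0$ there and that stretch of boundary contributes nothing; on the remaining part we use the nonlinear boundary condition and the monotonicity of $k$ exactly as in \eqref{2.16}–\eqref{2.17}. Then the same chain of estimates — the mean value / convexity bound $v_\mu^{(N+2)/(N-2)}-(v_\mu^\lambda)^{(N+2)/(N-2)}\le C v_\mu^{4/(N-2)}(v_\mu-v_\mu^\lambda)$ and its boundary analogue with exponent $2/(N-2)$, the boundedness of $|x-p_\mu|^{N-2}v_\mu$ on $\Sigma_\lambda$ (which is where the nonincreasing constant $C_\lambda$ comes from), the decay $v_\mu(x)^{4/(N-2)}\le C|x-p_\mu|^{-4}$ and $v_\mu(x)^{2/(N-2)}\le C|x-p_\mu|^{-2}$, and finally Hölder together with the Sobolev and Sobolev trace inequalities — yields \eqref{3.8} after letting $\ep\to0$. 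The argument that $I_\ep\to0$ is identical to the one in Lemma \ref{t 2.1}, using $(v_\mu-v_\mu^\lambda)^+\in L^{2^*}(\Sigma_\lambda)$ and $\int_{D_\ep^1\cup D_\ep^2}|\nabla\eta|^N\,dx\le C$.

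I expect no genuine obstacle: the restriction $\lambda<\mu-\frac1{2\mu}$ is precisely what guarantees that $T_\lambda$ lies strictly to the left of the singularity $p_\mu^\lambda$ and that the geometry of the two boundary pieces of \eqref{3.4} under reflection is as in the Neumann case, so the Dirichlet stretch never interferes with the region $A_\mu^\lambda$ or $B_\mu^\lambda$ in a way that would break the estimate. The one point requiring a line of care is the boundary term: one must check that on $\partial\Sigma_\lambda^1$ the pieces where $v_\mu$ satisfies $v_\mu=0$ (rather than the Robin condition) are exactly where $\varphi$ vanishes, so that the only surviving contribution is over $B_\mu^\lambda$ and is controlled by $k$'s monotonicity plus the decay of $v_\mu$; this is the Dirichlet analogue of the observation in Lemma \ref{t 3.1} that $\frac{\partial(v_\mu-v_\mu^\lambda)}{\partial\nu}\varphi=0$ on the Dirichlet boundary. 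Once that is noted, the proof is a routine transcription of Lemma \ref{t 2.4}, and I would write it as such, sketching the inequality chain and referring back to Lemma \ref{t 2.4} for the details.
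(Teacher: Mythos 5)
Your proposal is correct and follows essentially the same route as the paper: the paper's own proof simply observes that the boundary integral is bounded by the integral over $B_\mu^\lambda$ of the difference of the $k$-terms and then refers back to Lemma \ref{t 2.4}, exactly as you do. One small precision: on the Dirichlet stretch of $\partial\Sigma_\lambda^1$ the test function $\varphi$ vanishes because $v_\mu=0\leq v_\mu^\lambda$ (so $(v_\mu-v_\mu^\lambda)^+=0$), not because $v_\mu^\lambda$ is also zero there --- the reflected point may land in the nonlinear part of the boundary; your later remark, together with the observation that $\lambda<\mu-\frac1{2\mu}$ prevents a Robin point from reflecting into the Dirichlet strip, captures the right reason.
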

\begin{proof}
We only need to note that
$$
\int_{\partial \Sigma_\lb}\frac{\partial(v_\mu-v_\mu^\lb)}{\partial
\nu}\varphi(x)\,dx'\leq
\int_{B_\mu^\lb}[k(|x-p_\mu|^{N-2}v_\mu(x))v_\mu^{\frac{N}{N-2}}
-k(|x^\lb-p_\mu|^{N-2}v_\mu^\lambda)(v_\mu^\lambda)^{\frac{N}{N-2}}]\varphi\,dx'.
$$
The rest of the proof is the same the proof of Lemma \ref{t 2.4}, we
omit it.

\end{proof}

With the above inequality, we can start the moving plane procedure for $\mu<0$. First, we can move $\mu$ from $\mu_0=0$ to $\mu_1=-\frac{\sqrt2}2$, then for
$\mu\in [\mu_1,0)$, we can move the plane $T_\lambda$ from $\lambda=0$ to $\lambda=\mu$. That is, we have
$v_\mu(x)\leq v_{\mu}(x^\mu)$ for any $\mu\in [\mu_1,0)$ and $x\in \Sigma_\mu$. Next, we can move the plane $\mu$ from $\mu_1$ to $\mu_2=-\frac{3+\sqrt 2}4$. Similarly, for any $\mu\in [\mu_2,\mu_1)$, we can move the plane $T_\lambda$ from $\lambda=\mu_1$ to $\lambda=\mu$. That is, we have
$v_\mu(x)\leq v_{\mu}(x^\mu)$ for any $\mu\in [\mu_2,\mu_1)$ and $x\in \Sigma_\mu$. Continue the above procedure, in the $(k+1)$th step, the plane $\mu$ can be moved from $\mu_{k}$ from $\mu_{k+1}=\frac{\mu_k-\sqrt{\mu_k^2+2}}{2}$ and $T_\lb$ can be moved from $\mu_k$ to $\mu\in [\mu_{k+1},\mu_k)$. A direct calculation shows that $\mu_k\to -\infty$ as $k\to \infty$. So we have proved the following result.
\begin{proposition}\label{t 3.4}
For any $\mu< 0$, we have $v_\mu(x)\leq v_\mu^\mu(x)$ for $x\in \Sigma_\mu$.
\end{proposition}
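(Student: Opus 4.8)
The plan is to carry the scheme of the previous section over to problem \eqref{1.2} almost verbatim, with Lemma \ref{t 3.1} and Lemma \ref{t 3.3} playing the roles of Lemma \ref{t 2.1} and Lemma \ref{t 2.4}. We may assume $u\not\equiv 0$, since otherwise $v_\mu\equiv 0$ and the inequality is trivial. By Proposition \ref{t 3.2} we already have $v_0\le v_0^0$ on $\Sigma_0$, so the functions $\frac{1}{|x-p_\mu|^{2N}}\chi_{A_\mu^0}$ and $\frac{1}{|x-p_\mu|^{2(N-1)}}\chi_{B_\mu^0}$ tend to $0$ a.e.\ as $\mu\to 0^-$; dominated convergence drives the bracket in \eqref{3.8} below $\frac12$ for $\mu$ near $0$, and Lemma \ref{t 3.3} then forces $\int_{\Sigma_0}|\nabla(v_\mu-v_\mu^0)^+|^2\,dx=0$, i.e.\ $v_\mu\le v_\mu^0$ on $\Sigma_0$ for all $\mu$ in some interval $[\mu_0,0]$ with $\mu_0<0$.

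Next, following Lemma \ref{t 2.6}, set $\mu_1=\inf\{\mu<0:v_\mu\le v_\mu^0\text{ in }\Sigma_0\}$ and show that $\mu$ can be slid down to $-\frac{\sqrt2}{2}$. Supposing $\mu_1>-\frac{\sqrt2}{2}$, the monotonicity of $f$ together with hypothesis (ii) of Theorem \ref{t 1.2} gives $-\Delta v_{\mu_1}\le-\Delta v_{\mu_1}^0$, hence by the strong maximum principle either $v_{\mu_1}<v_{\mu_1}^0$ in $\Sigma_0$ or $v_{\mu_1}\equiv v_{\mu_1}^0$. In the first case, since also $\int_{A_\mu^0}\frac{dx}{|x-p_\mu|^{2N}}+\int_{B_\mu^0}\frac{dx'}{|x-p_\mu|^{2(N-1)}}\to 0$ as $\mu\to\mu_1$, Lemma \ref{t 3.3} propagates $v_\mu\le v_\mu^0$ to $\mu\in[\mu_1-\delta,\mu_1]$, contradicting the choice of $\mu_1$; in the second case, as in Lemma \ref{t 2.6}, the symmetry of $v_{\mu_1}$ together with its boundary conditions and the Hopf lemma forces $v_{\mu_1}\equiv 0$, i.e.\ $u\equiv 0$, which is excluded. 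Hence $v_\mu\le v_\mu^0$ on $\Sigma_0$ for all $\mu\in[-\frac{\sqrt2}{2},0)$, and for each such fixed $\mu$ one runs the ordinary moving plane in $\lambda$ from $\lambda=0$ down to $\lambda=\mu$ exactly as in Lemma \ref{t 2.2} and Lemma \ref{t 2.3}; here one uses $\mu-\frac1{2\mu}\ge0$ for $\mu\in[-\frac{\sqrt2}{2},0)$, so that the admissible window $(\mu,\mu-\frac1{2\mu})$ of Lemma \ref{t 3.3} covers the needed range $\lambda\in[\mu,0]$. The outcome is $v_\mu(x)\le v_\mu(x^\mu)$ on $\Sigma_\mu$ for every $\mu\in[-\frac{\sqrt2}{2},0)$.

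The rest is an induction producing $\mu_0=0$, $\mu_1=-\frac{\sqrt2}{2}$, $\mu_{k+1}=\frac{\mu_k-\sqrt{\mu_k^2+2}}{2}$. Assuming $v_{\mu_k}(x)\le v_{\mu_k}(x^{\mu_k})$ on $\Sigma_{\mu_k}$, repeat the two steps above with the plane $T_0$ replaced by $T_{\mu_k}$: first slide $\mu$ from $\mu_k$ down to $\mu_{k+1}$ (so $v_\mu\le v_\mu^{\mu_k}$ on $\Sigma_{\mu_k}$ there), then for each fixed $\mu\in[\mu_{k+1},\mu_k)$ slide $T_\lambda$ from $\mu_k$ to $\mu$. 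The recursion is exactly what Lemma \ref{t 3.3} dictates: $\mu_{k+1}$ is the root of $\mu_{k+1}-\frac1{2\mu_{k+1}}=\mu_k$ with $\mu_{k+1}<\mu_k$, so that for every $\mu\in[\mu_{k+1},\mu_k)$ the window $(\mu,\mu-\frac1{2\mu})$ reaches up to $\mu_k$ and the successive steps chain together. A short computation gives $\mu_k\to-\infty$, so every $\mu<0$ is eventually covered, which is the assertion of the proposition.

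The main obstacle is the boundary integral $\int_{\partial\Sigma_\lambda^1}\frac{\partial(v_\mu-v_\mu^\lambda)}{\partial\nu}\varphi\,dx'$ when $\mu<0$: in contrast with Lemma \ref{t 3.1}, where the Dirichlet condition annihilates it, here it is not nonpositive and can only be absorbed, via the Sobolev trace inequality and the decay of $v_\mu$, as long as $T_\lambda$ stays inside the bounded window of Lemma \ref{t 3.3}. Organising the sequence $\mu_k$ so that these windows overlap and exhaust $(-\infty,0)$ --- rather than sliding $\mu$ to $-\infty$ in a single sweep --- is the delicate bookkeeping, and it is what makes the $\mu<0$ analysis longer than its $\mu\ge0$ counterpart.
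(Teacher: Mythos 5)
Your proposal is correct and follows essentially the same route as the paper: start from $v_0\le v_0^0$, slide the reflection point $\mu$ in steps governed by the window of Lemma \ref{t 3.3} (with the analogues of Lemma \ref{t 2.5} and Lemma \ref{t 2.6}), then for each fixed $\mu$ slide $T_\lambda$ down to $\lambda=\mu$, with the recursion $\mu_{k+1}=\frac{\mu_k-\sqrt{\mu_k^2+2}}{2}$ coming from $\mu_{k+1}-\frac{1}{2\mu_{k+1}}=\mu_k$ and $\mu_k\to-\infty$. In fact you supply more detail than the paper, which simply asserts this two-level iteration by analogy with Section 2.
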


As a direct consequence of Proposition \ref{t 3.2} and Proposition \ref{t 3.4},
we have the following result.
\begin{proposition}\label{t 3.5}
For fixed $(x_2,x_3,\cdots,x_N)\in \mathbb R^{N-1}$,
$u(x_1,x_2,x_3,\cdots,x_N)$ is nondecreasing as $x_1$ decreases.
\end{proposition}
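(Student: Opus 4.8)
The plan is to deduce this monotonicity from the symmetry-type inequality $v_\mu\le v_\mu^\mu$ that has just been established for every value of $\mu$. Combining Proposition \ref{t 3.2} (the case $\mu\ge 0$) with Proposition \ref{t 3.4} (the case $\mu<0$), we have $v_\mu(x)\le v_\mu(x^\mu)$ for all $x\in\Sigma_\mu$ and for every $\mu\in\mathbb R$, where $x^\mu=(2\mu-x_1,x_2,\dots,x_N)$ denotes the reflection of $x$ across the hyperplane $\{x_1=\mu\}$. The remaining work is to translate this inequality on the Kelvin transform $v_\mu$ back into an inequality on $u$ itself.

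To do this, I would unfold the definition
$$
v_\mu(x)=\frac{1}{|x-p_\mu|^{N-2}}\,u\Bigl(p_\mu+\frac{x-p_\mu}{|x-p_\mu|^2}\Bigr)
$$
at $x$ and at $x^\mu$. The key elementary observations are: since $p_\mu=(\mu,0,\dots,0)$ lies on the hyperplane $\{x_1=\mu\}$, reflection across that hyperplane preserves the distance to $p_\mu$, so $|x^\mu-p_\mu|=|x-p_\mu|$ and the conformal factor $|x-p_\mu|^{-(N-2)}$ cancels on both sides; and the inversion centered at $p_\mu$ commutes with the reflection across any hyperplane through $p_\mu$, so that $p_\mu+\frac{x^\mu-p_\mu}{|x^\mu-p_\mu|^2}=\bigl(p_\mu+\frac{x-p_\mu}{|x-p_\mu|^2}\bigr)^\mu$. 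Writing $y=p_\mu+\frac{x-p_\mu}{|x-p_\mu|^2}$, the inequality $v_\mu(x)\le v_\mu(x^\mu)$ therefore becomes simply $u(y)\le u(y^\mu)$.

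Finally, I would observe that as $x$ ranges over $\Sigma_\mu=\{x_1>\mu\}$, its inversion image $y$ satisfies $y_1-\mu=(x_1-\mu)/|x-p_\mu|^2>0$, and in fact $x\mapsto y$ is a bijection of $\Sigma_\mu\setminus\{p_\mu\}$ onto itself. Hence $u(y)\le u(y^\mu)$ holds for every $y$ with $y_1>\mu$, that is, $u(x_1,x_2,\dots,x_N)\le u(2\mu-x_1,x_2,\dots,x_N)$ whenever $x_1>\mu$. Since $\mu\in\mathbb R$ is arbitrary, this is exactly the statement that $u$ is nondecreasing as $x_1$ decreases along each line parallel to the $x_1$-axis. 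I do not anticipate any genuine obstacle here; the only point requiring a word of justification is the commutation of inversion and reflection, which is immediate since both are conformal maps fixing $p_\mu$ and the hyperplane through it. The argument is identical to the one already used for Proposition \ref{t 2.8}.
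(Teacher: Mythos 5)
Your argument is correct and is essentially the paper's own: the paper deduces Proposition \ref{t 3.5} directly from Propositions \ref{t 3.2} and \ref{t 3.4} by the same unfolding of the Kelvin transform used in Proposition \ref{t 2.8} (reflection across $\{x_1=\mu\}$ fixes $|x-p_\mu|$ and commutes with the inversion, giving $u(x_1,x')\le u(2\mu-x_1,x')$ for $x_1>\mu$ and arbitrary $\mu$). No discrepancies worth noting.
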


With the above results, we are in the position of proving Theorem
\ref{t 1.2} now.
\begin{proof}[Proof of Theorem \ref{t 1.2}]
As the proof of Theorem \ref{t 1.1}, we first start the moving plane
method in the $x_2$ direction. For this purpose, we define the Kelvin
transformation $v_\mu$ of $u$ at $p_\mu=(0,\mu,0,\cdots,0)$ as
$$
v_\mu(x)=\frac{1}{|x-p_\mu|^{N-2}}u(p_\mu+\frac{x-p_\mu}{|x-p_\mu|^{2}}).
$$
and $\Sigma_\lb,T_\lb,x^\lb$ as before.

If we denote $v_\mu^\lb=v_\mu(x^\lb)$, then by the same reason as
Lemma \ref{t 3.1} and Lemma \ref{t 3.3}, we have the following
inequality
\begin{equation}\label{3.7}
\begin{split}
&\int_{\Sigma_\lambda}|\nabla (v-v^\lambda)^+|^2\,dx\\&\leq
C_\lambda[(\int_{A_\mu^\lambda}\frac
1{|x-p_\mu|^{2N}}\,dx)^{\frac2N}+(\int_{B_\mu^\lambda}\frac
1{|x-p_\mu|^{2(N-1)}}\,dx')^{\frac
1{N-1}}]\cdot(\int_{\Sigma_\lambda}|\nabla(v-v^\lambda)^+|^2\,dx)
\end{split}
\end{equation}
with $C_\lb,A_\lb$ and $B_\lb$ as before. With this inequality, we
can start the moving plane method for $\lb$ large enough. Now,
moving the plane $T_\lb$ from the right to the left and suppose that the
process stops at some $\lb_1$. If $\lb_1>\mu$, then we can prove
that $v_\mu$ is symmetric with respect to $T_{\lb_1}$ as before.
This means that $v_\mu$ is regular at $p_\mu$. In particular, we have
$u\in L^{\frac{2N}{N-2}}(\R)$. Otherwise, we must have $\lb_1\leq
\mu$, which implies $v_\mu\leq v_\mu^\mu$. At the same time, we can
start the moving plane method from the left. Similarly, if this process
stops at some $\lb_1<\mu$, then $u\in L^{\frac{2N}{N-2}}(\R)$,
otherwise, we have $\lb_1=\mu$ and $v_\mu^\mu\leq v_\mu$.

So during the process of moving plane in the $x_2,\cdots,x_{N-1}$
directions, two cases may occur.

Case 1: There exists some direction and some $\mu$, such that
$\lb_1\neq \mu$, i.e., the moving plane method stops at some
$\lb_1\neq \mu$. Then we conclude that $u$ is regular at infinity
and hence $u\in L^{\frac{2N}{N-2}}(\R)$. This is impossible unless
$u\equiv 0$ since $u$ is nondecreasing as $x_1$ decrease by
Proposition \ref{t 3.5}.

Case 2: For all $\mu$ and all directions $x_2,\cdots,x_{N-1}$, the
moving plane method stops at $\lb_1=\mu$. In this case, the function
$u$ is independent of $x_2,\cdots,x_{N-1}$ since $\mu$ is arbitrary.
So we have $u=u(x_1,x_N)$. Moreover, Proposition \ref{t 3.5} implies
that $u(x_1,x_n)$ is nondecreasing as $x_1$ decreases. On the other
hand, since $u$ is bounded, then the limit function
$w(x_N)=\lim_{x_1\to -\infty} u(x_1,x_N)$ exists and $w(x_N)$
satisfies
\begin{equation}\label{2.19}
\left\{
  \begin{array}{ll}
  \displaystyle
w^{\prime\prime}(x_N)=-f(w)    &{\rm for}\quad  x_N>0,    \\
\displaystyle
\\ \frac{\partial w}{\partial x_N}=-g(u) &{\rm for}\quad x_N=0.
\end{array}
\right.
\end{equation}
If $w\not\equiv c$ with $f(c)=g(c)=0$, then the first equation implies that $w$ is a
concave function, while the second equation implies $w'(0)<0$. Hence
there exists $r_0>0$ such that $w(x_N)<0$ for $x_N>r_0$, this
contradicts that $u$ is a nonnegative solution to problem \eqref{1.2}.
So we have $w\equiv c$ and the Dirichlet boundary value condition implies $u\equiv 0$. This finishes the
proof of Theorem \ref{t 1.2}.

\end{proof}

{\bf Acknowledgement}: This work is supported by NSFC, No.11101291.

\end{document}